\documentclass[11pt,a4paper]{article}

\usepackage[utf8]{inputenc}
\usepackage[T1]{fontenc}
\usepackage[english]{babel} 
\usepackage{amsmath}
\usepackage{amsfonts}
\usepackage{amsthm}
\usepackage{amssymb}
\usepackage{makeidx}
\usepackage{graphicx}
\usepackage{lmodern}
\usepackage[left=2cm,right=2cm,top=2cm,bottom=2cm]{geometry}

\usepackage{color}
\usepackage{comment}
\usepackage[dvipsnames]{xcolor}
\usepackage{graphicx}
\usepackage{framed}
\usepackage{tikz}
\usepackage{calrsfs}
\DeclareMathAlphabet{\pazocal}{OMS}{zplm}{m}{n}
\usepackage{bm}

\usepackage{fourier-orns}
\usepackage{mathtools}
\usepackage{relsize}
\usepackage{dsfont}
\usepackage{comment}

\usepackage{hyperref} 
\hypersetup{
	linktocpage = true,
	colorlinks = true,
	linkcolor = {RoyalBlue},
	urlcolor = {RoyalBlue},
	citecolor = {Green}}

\newcommand{\B}{\mathbb{B}}
\newcommand{\R}{\mathbb{R}}

\newcommand{\Fpazo}{\pazocal{F}}

\newcommand{\Upazo}{\pazocal{U}}

\newcommand{\Mpazo}{\pazocal{M}}

\newcommand{\Qpazo}{\pazocal{Q}}

\newcommand{\Rpazo}{\pazocal{R}}
\newcommand{\Spazo}{\pazocal{S}}
\newcommand{\Opazo}{\pazocal{O}}
\newcommand{\Wpazo}{\pazocal{W}}

\newcommand{\Dcal}{\mathcal{D}}

\newcommand{\Lcal}{\mathcal{L}}
\newcommand{\Vcal}{\mathcal{V}}
\newcommand{\Pcal}{\mathcal{P}}

\newcommand{\Scal}{\mathcal{S}}

\newcommand{\Tcal}{\mathcal{T}}

\newcommand{\Id}{\textnormal{Id}}

\newcommand{\D}{\textnormal{D}}

\newcommand{\supp}{\textnormal{supp}}

\newcommand{\Lip}{\textnormal{Lip}}
\newcommand{\AC}{\textnormal{AC}}

\newcommand{\Div}{\textnormal{div}}

\newcommand{\dist}{\textnormal{dist}}
\newcommand{\textbn}[1]{\textnormal{\textbf{#1}}}
\newcommand{\co}{\overline{\textnormal{co}} \hspace{0.05cm}}

\newcommand{\Bpi}{\boldsymbol{\pi}}

\newcommand{\Bmu}{\boldsymbol{\mu}}

\newcommand{\INTDom}[3]{\int_{#2} #1 \textnormal{d} #3}
\newcommand{\INTSeg}[4]{\int_{#3}^{#4} #1 \textnormal{d} #2}
\newcommand{\NormL}[3]{\parallel \hspace{-0.1cm} #1 \hspace{-0.1cm} \parallel _ {L^{#2}(#3)}}
\newcommand{\NormC}[3]{\left\| #1  \right\| _ {C^{#2}(#3)}}
\newcommand{\Norm}[1]{\parallel \hspace{-0.1cm} #1 \hspace{-0.1cm} \parallel}

\newtheorem{Def}{Definition}[section]
\newtheorem{thm}[Def]{Theorem}
\newtheorem{prop}[Def]{Proposition}

\numberwithin{equation}{section}
\renewcommand{\epsilon}{\varepsilon}

\newenvironment{taggedhyp}[1]
    {\taggedhypx}
    {\endtaggedhypx}

\title{Viability and Exponentially Stable Trajectories for Differential Inclusions in Wasserstein Spaces}

\author{Benoît Bonnet\footnote{CNRS, LAAS, 7 avenue du colonel Roche, F-31400 Toulouse, France. \textit{E-mail}: \texttt{benoit.bonnet@laas.fr} (Corresponding author)} \,and Hélène Frankowska\footnote{CNRS,  IMJ-PRG,  UMR  7586,  Sorbonne  Université,  4  place  Jussieu,  75252  Paris,  France. \hfill \hspace{3.5cm} \textit{E-mail}: \texttt{helene.frankowska@imj-prg.fr}}}


\begin{document}

\maketitle

\begin{abstract}
In this article, we prove a general viability theorem for continuity inclusions in Wasserstein spaces, and provide an application thereof to the existence of exponentially stable trajectories obtained via the second method of Lyapunov.
\end{abstract}


\section{Introduction}

During the past decade, the study of continuity equations in the space of measures has gained a tremendous amount of steam. Originally motivated by applications to crowd motion \cite{CPT,FornasierPR2014,Pedestrian}, opinion propagation \cite{AlbiPZ2017,ControlKCS} and game theory \cite{CDLL,Huang2006,Lasry2007}, the investigation of the mathematical properties of multi-agent systems -- mostly studied via optimal transport techniques in the mean-field setting -- has become a broad field of research at the intersection between pure, applied and computational mathematics. In this context, the literature devoted to the analysis of control problems formulated in the so-called \textit{Wasserstein spaces} has been steadily growing for several years (see e.g. \cite{Badreddine2022,ContInc,SetValuedPMP,SemiSensitivity,PMPWass,Burger2021,Cavagnari2021,Fornasier2019} and the references therein).

The aim of this paper is to present a novel viability result for set-valued dynamics in Wasserstein spaces, following the terminology introduced in our previous work \cite{ContInc}. The concept of viability, which goes back to the eighties for differential inclusions, has tremendous applications in control theory, e.g. to ensure the existence of solutions to state-constrained control systems \cite[Chapter 10]{Aubin1990}, to derive optimality conditions in the form of Hamilton-Jacobi-Bellman equations \cite{Frankowska1995} or to characterise the existence of Lyapunov stable trajectories \cite{Frankowska1996}. Motivated by these aspects, we derive a general viability result for set-valued dynamics in Wasserstein spaces in Theorem \ref{thm:Viability}. The latter relies strongly on Theorem \ref{thm:Velocity}, which is a technical prerequisite ensuring the existence of solutions with prescribed initial velocities to continuity inclusions. Finally, we apply these results in Theorem \ref{thm:Lyapunov}, where we illustrate how they can be used to obtain exponentially stable trajectories in terms of a given Lyapunov function.

The organisation of the paper is the following. After recollecting preliminary notions of optimal transport and functional analysis in Section \ref{section:Preli}, we prove the existence of solutions to continuity inclusions with prescribed initial velocities in Section \ref{section:Velocity}, and use the corresponding result to prove the viability theorem in Section \ref{section:Viability}. Then, in Section \ref{section:Lyapunov}, we discuss the existence of exponentially stable trajectories via the second method of Lyapunov.


\section{Preliminaries}
\label{section:Preli}

In this section, we recall preliminary notions of measure theory, optimal transport and set-valued analysis, for which we refer to \cite{AmbrosioFuscoPallara,AGS} and \cite{Aubin1990} respectively.


\subsection{Optimal transport and calculus in Wasserstein spaces}

In the sequel, $\Pcal(\Omega)$ will denote the space of Borel probability measures over a Borel measurable set $\Omega \subset \R^d$, endowed with the standard \textit{narrow topology} (see \cite[Chapter 5]{AGS}). Let $\Pcal_2(\R^d)$ be the subset of probability measures whose $2$-momentum $\Mpazo_2^2(\mu) := \INTDom{|x|^2}{\R^d}{\mu(x)}$ is finite, and $\Pcal_c(\R^d)$ be that of measures with compact support. Given $\mu \in \Pcal(\R^d)$ and $p \in [1,+\infty)$, we denote by $(L^p(\R^d,\R^d;\mu),\Norm{\cdot}_{L^p(\mu)})$ the Banach space of maps from $\R^d$ into itself that are $p$-summable with respect to $\mu$, and by $L^{\infty}(\R^d,\R^d;\mu)$ that of $\mu$-essentially bounded maps. We will also denote by $\Lcal^1$ the standard Lebesgue measure over $\R$, and use the shorter notation $(L^p(I,\R_+),\Norm{\cdot}_p)$ with $p \in [1,+\infty]$ for the Lebesgue spaces of maps going from an interval $I \subset \R$ into $\R_+$.

Given a Borel map $f : \R^d \rightarrow \R^d$, we define the \textit{pushforward} $f_{\sharp} \mu \in \Pcal(\R^d)$ of $\mu$ through $f$ as the unique measure satisfying
$ f_{\sharp} \mu(B) := \mu(f^{-1}(B))$ for every Borel set $B \subset \R^d$. Using this notation, we can define the set of \textit{transport plans} between two elements $\mu,\nu \in \Pcal(\R^d)$ as
\begin{equation*}
\Gamma(\mu,\nu) := \Big\{ \gamma \in \Pcal(\R^{2d}) ~\, \textnormal{s.t.}~ \pi^1_{\sharp} \gamma = \mu ~~\text{and}~~ \pi^2_{\sharp} \gamma = \nu \Big\},
\end{equation*}
where $\pi^1,\pi^2 : \R^d \times \R^d \to \R^d$ are the projections onto the first and second factors respectively. Leveraging this notion, we can in turn recall the notion of \textit{Wasserstein distance} between measures.

\begin{Def}[Wasserstein distance]
\label{def:Wass}
The quantity
\begin{equation*}
W_2(\mu,\nu) := \min_{\gamma \in \Gamma(\mu,\nu)} \bigg( \INTDom{|x-y|^2}{\R^{2d}}{\gamma(x,y)} \bigg)^{1/2},
\end{equation*}
defines a distance between any two measures $\mu,\nu \in \Pcal_2(\R^d)$, and we denote by $\Gamma_o(\mu,\nu)$ the (nonempty) set of transport plans at which the minimum is attained.
\end{Def}

Following \cite{AGS,Otto2001}, the complete separable metric space $(\Pcal_2(\R^d),W_2)$ -- usually called \textit{Wasserstein space} -- can be formally endowed with the structure of a differentiable manifold. In the sequel, we will also consider the (non-complete) metric space $(\Pcal_c(\R^d),W_2)$ of compactly supported measures equipped with the $W_2$-metric. We end this first preliminary section by stating a simplified version of a pivotal result of Wasserstein calculus, allowing to describe the superdifferential of the squared Wasserstein distance (see e.g. \cite[Theorem 10.2.2]{AGS}).

\begin{prop}
\label{prop:WassSuperdiff}
For every $\mu,\nu \in \Pcal_2(\R^d)$ and each $\gamma \in \Gamma_o(\mu,\nu)$, it holds that
\begin{equation*}
\begin{aligned}
\tfrac{1}{2} W^2_2 \big( (\Id + & h \xi)_{\sharp} \mu, \nu \big) - \tfrac{1}{2}W_2^2(\mu,\nu) \\
& \leq h \INTDom{\langle \xi(x) , x-y \rangle}{\R^{2d}}{\gamma(x,y)} + h^2 \Norm{\xi}_{L^2(\mu)}^2
\end{aligned}
\end{equation*}
for every $\xi \in L^2(\R^d,\R^d;\mu)$ and each $h > 0$.
\end{prop}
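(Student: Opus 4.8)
The plan is to estimate the left-hand side from above by building an explicit (and generally non-optimal) transport plan between $(\Id + h\xi)_{\sharp} \mu$ and $\nu$ directly out of the optimal plan $\gamma$, and then expanding the resulting quadratic cost. First I would introduce the Borel map $\Phi_h : \R^d \times \R^d \to \R^d \times \R^d$ defined by $\Phi_h(x,y) := (x + h\xi(x),y)$, and set $\gamma_h := (\Phi_h)_{\sharp} \gamma$. Since $\pi^1 \circ \Phi_h = (\Id + h \xi) \circ \pi^1$ and $\pi^2 \circ \Phi_h = \pi^2$, the functoriality of the pushforward gives
\begin{equation*}
\pi^1_{\sharp} \gamma_h = (\Id + h\xi)_{\sharp} (\pi^1_{\sharp} \gamma) = (\Id + h\xi)_{\sharp} \mu, \qquad \pi^2_{\sharp} \gamma_h = \pi^2_{\sharp} \gamma = \nu,
\end{equation*}
so that $\gamma_h \in \Gamma\big( (\Id + h\xi)_{\sharp} \mu, \nu \big)$ is an admissible competitor.

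Because $W_2^2$ is defined as a minimum over admissible plans, the cost of $\gamma_h$ furnishes an upper bound. Applying the change-of-variables formula for the pushforward and expanding the square, I obtain
\begin{equation*}
\begin{aligned}
W_2^2\big( (\Id + h\xi)_{\sharp} \mu, \nu \big) &\leq \INTDom{|x' - y|^2}{\R^{2d}}{\gamma_h(x',y)} = \INTDom{|x + h\xi(x) - y|^2}{\R^{2d}}{\gamma(x,y)} \\
&= \INTDom{|x-y|^2}{\R^{2d}}{\gamma(x,y)} + 2h \INTDom{\langle \xi(x), x - y\rangle}{\R^{2d}}{\gamma(x,y)} + h^2 \INTDom{|\xi(x)|^2}{\R^{2d}}{\gamma(x,y)}.
\end{aligned}
\end{equation*}

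To finish, I would identify each term through the defining properties of $\gamma$. Since $\gamma \in \Gamma_o(\mu,\nu)$ is optimal, the first integral equals $W_2^2(\mu,\nu)$; and since $\pi^1_{\sharp} \gamma = \mu$, the last integral equals $\Norm{\xi}_{L^2(\mu)}^2$. Dividing through by $2$ then yields
\begin{equation*}
\tfrac{1}{2} W_2^2\big( (\Id + h\xi)_{\sharp} \mu, \nu\big) - \tfrac{1}{2} W_2^2(\mu,\nu) \leq h \INTDom{\langle \xi(x), x-y\rangle}{\R^{2d}}{\gamma(x,y)} + \tfrac{1}{2} h^2 \Norm{\xi}_{L^2(\mu)}^2,
\end{equation*}
which is marginally sharper than the claimed inequality, the stated bound following at once from $\tfrac{1}{2} \leq 1$. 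I do not expect a serious obstacle here: the only two points deserving a line of justification are that $\gamma_h$ has the announced marginals, and that the three integrals are finite — the cross term being controlled by Cauchy--Schwarz through $\big| \INTDom{\langle \xi(x), x-y\rangle}{\R^{2d}}{\gamma(x,y)} \big| \leq \Norm{\xi}_{L^2(\mu)} \, W_2(\mu,\nu)$, which is exactly where the hypothesis $\xi \in L^2(\R^d,\R^d;\mu)$ enters.
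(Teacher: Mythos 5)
Your proof is correct. Note, however, that the paper does not prove this proposition at all: it is recalled as a preliminary, a ``simplified version'' of the superdifferentiability theorem for the squared Wasserstein distance, with a citation to \cite[Theorem 10.2.2]{AGS}. Your argument is therefore a genuinely self-contained alternative to invoking that general result, and it is the natural elementary one: push the optimal plan $\gamma$ forward through $(x,y) \mapsto (x + h\xi(x),y)$ to obtain an admissible (generally suboptimal) competitor in $\Gamma\big( (\Id + h\xi)_{\sharp}\mu, \nu \big)$, bound the minimum in the definition of $W_2^2$ by the cost of this competitor, and expand the square. All the individual steps check out: the marginal computation via functoriality of the pushforward, the identification $\INTDom{|x-y|^2}{\R^{2d}}{\gamma(x,y)} = W_2^2(\mu,\nu)$ by optimality of $\gamma$, the identification of the quadratic term with $\Norm{\xi}_{L^2(\mu)}^2$ through the first marginal, and the Cauchy--Schwarz control of the cross term (which also guarantees, together with $\xi \in L^2(\R^d,\R^d;\mu)$, that $(\Id + h\xi)_{\sharp}\mu \in \Pcal_2(\R^d)$, so the left-hand side is finite). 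Your bound is in fact marginally sharper than the stated one, with $\tfrac{h^2}{2}\Norm{\xi}_{L^2(\mu)}^2$ in place of $h^2 \Norm{\xi}_{L^2(\mu)}^2$, and the stated inequality follows trivially. What the citation to \cite{AGS} buys instead is the full superdifferentiability theory (in particular the $o(h)$-type statements valid along arbitrary perturbations and the identification of the superdifferential of $\tfrac{1}{2}W_2^2(\cdot,\nu)$), none of which is needed for the inequality as stated; for the purposes of this paper your direct argument suffices.
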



\subsection{Elements of set-valued analysis}

In what follows, given a complete separable metric space $(\Scal,d_{\Scal}(\cdot,\cdot))$ and a Fr\'echet space $(E,d_E(\cdot,\cdot))$ (see e.g. \cite{Horvath2012}), we shall write $\Fpazo : \Scal\rightrightarrows E$ to denote \textit{set-valued maps} from $\Scal$ into $E$. We will also denote by $C^0(\R^d,\Scal)$ and $\AC([0,T],\Scal)$ the spaces of continuous and absolutely continuous maps from $\R^d$ and $[0,T]$ into $\Scal$ respectively, and write $\Lip(\phi \, ; K)$ for the Lipschitz constant of a map $\phi : \R^d \to \Scal$ over some set $K \subset \R^d$.

In the coming definitions, we recall the notions of \textit{measurability} and \textit{lower-semicontinuity} for multifunctions. Therein and in general, $\B_{\Scal}(s,r)$ will stand for the ball of radius $r>0$ centered at $s \in \Scal$.

\begin{Def}[Measurability]
A set-valued mapping $\Fpazo : [0,T] \rightrightarrows E$ is $\Lcal^1$-measurable provided that
\begin{equation*}
\Fpazo^{-1}(\Opazo) := \Big\{ s \in \Scal ~\, \text{s.t.}~ \Fpazo(s) \cap \Opazo \neq \emptyset \Big\}
\end{equation*}
is $\Lcal^1$-measurable for each open set $\Opazo \subset E$. We then say that an $\Lcal^1$-measurable function $t \in [0,T] \mapsto f(t) \in \Fpazo(t)$ is a \textit{measurable selection}.
\end{Def}

\begin{Def}[Lower-semicontinuity]
A set-valued mapping $\Fpazo : \Scal \rightrightarrows E$ is lower-semicontinuous at $s \in \Scal$ if for each open set $\Upazo \subset E$ such that $\Fpazo(s) \cap \Upazo \neq \emptyset$, there exists $\delta >0$ such that $\Fpazo(s') \cap \Upazo \neq \emptyset$ for all $s' \in \B_{\Scal}(s,\delta)$.
\end{Def}

In what follows, we will denote by $\co B$ the \textit{closed convex hull} of a set $B \subset E$, defined by
\begin{equation*}
\co B := \overline{\bigcup_{N \geq 1}\bigg\{ \mathsmaller{\sum}\limits_{i=1}^N \alpha_i b_i \,~\text{s.t.}~ \alpha_i \geq 0, \; b_i \in B, \; \mathsmaller{\sum}\limits_{i=1}^N \alpha_i = 1 \bigg\}}^E.
\end{equation*}
We will also use the standard notations $\textnormal{int}(\Qpazo)$ and $\partial \Qpazo := \Qpazo \setminus \textnormal{int}(\Qpazo)$ to refer to the interior and topological boundary of a closed set $\Qpazo \subset \Scal$ respectively, as well as
\begin{equation*}
\dist_{\Scal} (\Qpazo \, ; \Rpazo) := \inf \Big\{ d_{\Scal}(s,s') ~\, \textnormal{s.t.}~ s \in \Qpazo ~\text{and}~ s' \in \Rpazo \Big\}
\end{equation*}
for the distance between two closed subsets of $\Scal$. We finally recall that $(C^0(K,\R^d),\NormC{\cdot}{0}{K,\R^d})$ is a separable Banach space whenever $K \subset \R^d$ is compact.


\subsection{Continuity equations and inclusions in $(\Pcal_c(\R^d),W_2)$}

In the sequel, we will consider measure dynamics described by \textit{continuity equations} of the form
\begin{equation}
\label{eq:CE}
\partial_t \mu(t) + \Div_x(v(t)\mu(t)) = 0,
\end{equation}
whose solutions are understood in the sense of distributions, namely
\begin{equation*}
\INTSeg{\INTDom{\Big( \partial_t \phi(t,x) + \big\langle \nabla_x \phi(t,x) , v(t,x) \big\rangle \Big)}{\R^d}{\mu(t)(x)}}{t}{0}{T} = 0,
\end{equation*}
for each $\phi \in C^{\infty}_c((0,T) \times \R^d)$. Here and in what follows, we shall assume that the \textit{velocity-fields} $v : [0,T] \times \R^d \rightarrow \R^d$ satisfy the following assumptions.

\begin{taggedhyp}{\textbn{(CE)}} \hfill
\label{hyp:CE}
\begin{enumerate}
\item[$(i)$] The application $t \in [0,T] \mapsto v(t,x) \in \R^d$ is $\Lcal^1$-measurable for all $x \in \R^d$, and there exists a map $m(\cdot) \in L^1([0,T],\R_+)$ such that
\begin{equation*}
|v(t,x)| \leq m(t) \big( 1+|x| \big),
\end{equation*}
for $\Lcal^1$-almost every $t \in [0,T]$ and all $x \in \R^d$.
\item[$(ii)$] For each compact set $K \subset \R^d$, there exists a map $l_K(\cdot) \in L^1([0,T],\R_+)$ such that
\begin{equation*}
\Lip(v(t) \, ;K) \leq l_K(t),
\end{equation*}
for $\Lcal^1$-almost every $t \in [0,T]$.
\end{enumerate}
\end{taggedhyp}

Under these Cauchy-Lipschitz assumptions, we have the following well-posedness result for \eqref{eq:CE} (see e.g. \cite{ContInc}).

\begin{thm}[Well-posedness of \eqref{eq:CE}]
\label{thm:Wellposedness}
Let $r >0$ and suppose that $v : [0,T] \times \R^d \to \R^d$ satisfies hypotheses \ref{hyp:CE}. Then for each $(\tau,\mu_{\tau}) \in [0,T] \times \Pcal(B(0,r))$, there exists a unique solution $\mu(\cdot) \in \AC([\tau,T],\Pcal_c(\R^d))$ to
\begin{equation}
\label{eq:CEtau}
\left\{
\begin{aligned}
& \partial_t \mu(t) + \Div_x(v(t)\mu(t)) = 0, \\
& \mu(\tau) = \mu_{\tau}.
\end{aligned}
\right.
\end{equation}
Moreover, the curve $\mu(\cdot)$ satisfies
\begin{equation}
\label{eq:ACEst}
\supp(\mu(t)) \subset B(0,R_r), \hspace{0.3cm} W_2(\mu(t),\mu(s)) \leq c_r \INTSeg{m(\zeta)}{\zeta}{s}{t},
\end{equation}
for all $\tau \leq s \leq t \leq T$, where $R_r,c_r >0$ depend only on the magnitudes of $r,\Norm{m(\cdot)}_1$. Furthermore, the latter can be represented explicitly as
\begin{equation}
\label{eq:Semigroup}
\mu(t) = \Phi_{(\tau,t)}^v(\cdot)_{\sharp} \mu(\tau),
\end{equation}
for every $t \in [\tau,T]$, where $(\Phi_{(\tau,t)}(\cdot))_{t \in [0,T]}$ are the \textit{flows of diffeomorphisms} defined as the unique solution of
\begin{equation*}
\left\{
\begin{aligned}
\partial_t \Phi_{(\tau,t)}^v(x) & = v \big( t ,\Phi_{(\tau,t)}^v(x)\big), \\
\Phi_{(\tau,\tau)}^v(x) & = x,
\end{aligned}
\right.
\end{equation*}
for all $x \in \R^d$.
\end{thm}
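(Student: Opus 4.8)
The plan is to establish well-posedness of \eqref{eq:CEtau} by reducing the measure-valued dynamics to the classical Cauchy--Lipschitz theory for the characteristic ODE, and then transporting the initial datum along the resulting flow. First I would fix $(\tau,\mu_\tau) \in [0,T]\times\Pcal(B(0,r))$ and study the non-autonomous ODE $\dot{x}(t) = v(t,x(t))$ with $x(\tau) = x_0$. Hypothesis \ref{hyp:CE}$(i)$ provides $\Lcal^1$-measurability in $t$ together with the sublinear growth bound $|v(t,x)| \leq m(t)(1+|x|)$, while \ref{hyp:CE}$(ii)$ gives local-in-space Lipschitz regularity with an $L^1$-in-time constant $l_K(\cdot)$. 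These are exactly the Carath\'eodory-type hypotheses under which the classical existence-uniqueness theorem applies, so for each $x_0 \in \R^d$ there is a unique absolutely continuous solution, which I would denote $t \mapsto \Phi^v_{(\tau,t)}(x_0)$. A Gr\"onwall argument using the sublinear bound yields an a priori estimate of the form $|\Phi^v_{(\tau,t)}(x_0)| \leq (1+|x_0|)\exp\!\big(\INTSeg{m(\zeta)}{\zeta}{\tau}{t}\big)$, and differentiating the flow in its initial condition (again via Gr\"onwall, now with $l_K$) shows that $x_0 \mapsto \Phi^v_{(\tau,t)}(x_0)$ is a bi-Lipschitz homeomorphism, hence a flow of diffeomorphisms.

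Next I would define the candidate solution by the pushforward formula \eqref{eq:Semigroup}, namely $\mu(t) := \Phi^v_{(\tau,t)}(\cdot)_\sharp\mu_\tau$. The support estimate in \eqref{eq:ACEst} follows immediately from the a priori growth bound on the flow: since $\supp(\mu_\tau) \subset B(0,r)$ and the flow maps $B(0,r)$ into some ball $B(0,R_r)$ with $R_r$ depending only on $r$ and $\Norm{m(\cdot)}_1$, the support of $\mu(t)$ is contained in $B(0,R_r)$. To verify that this $\mu(\cdot)$ solves \eqref{eq:CE} in the distributional sense, I would take $\phi \in C^\infty_c((0,T)\times\R^d)$, write $\INTDom{\phi(t,x)}{\R^d}{\mu(t)(x)} = \INTDom{\phi(t,\Phi^v_{(\tau,t)}(x))}{\R^d}{\mu_\tau(x)}$ by the change of variables inherent in the pushforward, and differentiate in $t$ under the integral sign. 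The chain rule applied to $t \mapsto \phi(t,\Phi^v_{(\tau,t)}(x))$, together with the defining ODE $\partial_t\Phi^v_{(\tau,t)}(x) = v(t,\Phi^v_{(\tau,t)}(x))$, reproduces precisely the weak formulation of the continuity equation after integrating over $[0,T]$. The $W_2$-estimate in \eqref{eq:ACEst} comes from using the natural coupling $(\Phi^v_{(\tau,s)},\Phi^v_{(\tau,t)})_\sharp\mu_\tau \in \Gamma(\mu(s),\mu(t))$, so that $W_2^2(\mu(s),\mu(t)) \leq \INTDom{|\Phi^v_{(\tau,t)}(x) - \Phi^v_{(\tau,s)}(x)|^2}{\R^d}{\mu_\tau(x)}$ and then bounding the integrand by $\big(\INTSeg{|v(\zeta,\Phi^v_{(\tau,\zeta)}(x))|}{\zeta}{s}{t}\big)^2$, which via the growth bound and the uniform support estimate gives the stated Lipschitz-in-time control with constant $c_r$.

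The remaining and most delicate point is uniqueness, which I would address by a superposition/characteristics argument rather than by direct energy estimates. Given any solution $\nu(\cdot) \in \AC([\tau,T],\Pcal_c(\R^d))$ of \eqref{eq:CEtau}, the superposition principle (in the form available for continuity equations with velocity fields satisfying \ref{hyp:CE}) yields a probability measure on the space of absolutely continuous trajectories concentrated on integral curves of $v$; because hypothesis \ref{hyp:CE}$(ii)$ guarantees uniqueness of those integral curves for each fixed starting point, this measure must be $\Phi^v_{(\tau,\cdot)}(\cdot)_\sharp\mu_\tau$, forcing $\nu(t) = \mu(t)$ for all $t$. I expect this uniqueness step to be the main obstacle, since it requires the compact-support and local-Lipschitz hypotheses to be leveraged carefully to rule out branching of characteristics on the (compact, hence bounded) support of the flow; the existence, support, semigroup representation, and continuity estimates are comparatively routine consequences of Gr\"onwall's inequality and the change-of-variables formula. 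Since the statement directs the reader to \cite{ContInc}, I would in practice cite that reference for the full technical uniqueness argument while presenting the constructive existence part in detail.
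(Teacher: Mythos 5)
Your proposal is correct, and it follows essentially the same route as the proof the paper relies on: this theorem is stated in the paper as a recalled result with no in-text proof (it defers entirely to the cited reference \cite{ContInc}), and the argument there is precisely the characteristics construction you describe --- Cauchy--Lipschitz flow, pushforward representation \eqref{eq:Semigroup}, Gr\"onwall bounds for the support and $W_2$ estimates \eqref{eq:ACEst}, and uniqueness via the superposition principle combined with uniqueness of the underlying integral curves. The only cosmetic caveat is that under hypotheses \ref{hyp:CE} the flow maps are bi-Lipschitz homeomorphisms rather than diffeomorphisms in the smooth sense, but this is a looseness of terminology already present in the statement itself and does not affect any of your estimates.
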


In our previous work \cite{ContInc}, we proposed a set-valued generalisation of Cauchy problems of the form \eqref{eq:CE}, for which the right-hand sides are \textit{set-valued maps}
\begin{equation*}
V : [0,T] \times \Pcal_2(\R^d) \rightrightarrows C^0(\R^d,\R^d),
\end{equation*}
whose images typically lie within subsets of locally Lipschitz and sublinear vector fields.

\begin{Def}[Continuity inclusions]
\label{def:Inc}
A curve of measures $\mu(\cdot)  \in \AC([\tau,T],\Pcal_c(\R^d))$ is a solution of
\begin{equation}
\label{eq:CI}
\left\{
\begin{aligned}
& \partial_t \mu(t) \in - \Div_x \Big( V(t,\mu(t)) \mu(t) \Big), \\
& \mu(\tau) = \mu_{\tau},
\end{aligned}
\right.
\end{equation}
if there exists a map $t \in [0,T] \mapsto v(t) \in V(t,\mu(t))$ such that $\mu(\cdot)$ solves \eqref{eq:CEtau}, with $t \in [0,T] \mapsto v(t)_{|K} \in C^0(K,\R^d)$ being $\Lcal^1$-measurable for each compact set $K \subset \R^d$.
\end{Def}

Throughout this article, we will impose the following set of assumptions on the set-valued map $V(\cdot,\cdot)$, and will sometimes use the notation
\begin{equation*}
V(t,\mu)_{|K} := \Big\{ v_{|K} \in C^0(K,\R^d) ~\, \text{s.t.}~ v \in V(t,\mu) \Big\}
\end{equation*}
where $K \subset \R^d$ is a compact set.

\begin{taggedhyp}{\textbn{(CI)}} \hfill
\label{hyp:CI}
\begin{enumerate}
\item[$(i)$] The set-valued map $t \in [0,T] \rightrightarrows V(t,\mu)_{|K} \subset C^0(K,\R^d)$ is lower-semicontinuous with closed non-empty images for all $\mu \in \Pcal(K)$ whenever $K \subset \R^d$ is a compact set.
\item[$(ii)$] There exists a map $m(\cdot) \in L^1([0,T],\R_+)$ such that for all $\mu \in \Pcal_2(\R^d)$ and each $v \in V(t,\mu)$, it holds
\begin{equation*}
|v(x)| \leq m(t) \Big( 1+ |x| + \Mpazo_2(\mu) \Big),
\end{equation*}
for $\Lcal^1$-almost every $t \in [0,T]$ and all $x \in \R^d$.
\item[$(iii)$] There exists $l(\cdot) \in L^1([0,T],\R_+)$ such that for $\Lcal^1$-almost every $t \in [0,T]$, all $\mu \in \Pcal_2(\R^d)$ and each $v \in V(t,\mu)$, it holds
\begin{equation*}
\Lip(v \, ; \R^d) \leq l(t).
\end{equation*}
\item[$(iv)$] There exists $L(\cdot) \in L^1([0,T],\R_+)$ such that for $\Lcal^1$-almost every $t \in [0,T]$, all $\mu,\nu \in \Pcal_2(\R^d)$ and each $v \in V(t,\mu)$, there exists $w \in V(t,\nu)$ such that
\begin{equation*}
\begin{aligned}
\sup_{x \in \R^d} |v(x) - w(x)| \leq L(t) W_2(\mu,\nu).
\end{aligned}
\end{equation*}
\end{enumerate}
\end{taggedhyp}

\medskip

We would like to stress that hypothesis \ref{hyp:CI}-$(i)$ is not sharp compared to its natural measurability counterparts in \ref{hyp:CE}-$(i)$ or \cite[p.608]{ContInc}, but greatly simplifies the proofs of Section \ref{section:Velocity}. Similarly, one could opt for localised versions of \ref{hyp:CI}-$(iii)$ and $(iv)$, at the price of extra technicalities. For the sake of simplicity and readability, we defer the investigation of viability properties in such a general context to a subsequent article.

In the following theorem, we recall a condensed version of the Filippov estimates derived in \cite[Theorem 4]{ContInc}.

\begin{thm}[Filippov estimates]
\label{thm:Filippov}
Let $w : [0,T] \times \R^d \to \R^d$ be a velocity field satisfying hypotheses \ref{hyp:CE}, $\nu^0 \in \Pcal_c(\R^d)$ and $\nu(\cdot)$ be the unique solution of
\begin{equation*}
\partial_t \nu(t) + \Div_x (w(t)\nu(t)) = 0, \qquad \nu(0) = \nu^0.
\end{equation*}
Moreover, let  $r > 0$ be such that $\supp(\nu(t)) \subset B(0,r)$ for all $t \in [0,T]$, and consider the Lebesgue integrable map
\begin{equation*}
\eta_{\nu} : t \in [0,T] \mapsto \dist_{C^0(B(0,r),\R^d)} \Big( w(t) \, ; V(t,\nu(t))_{|B(0,r)} \Big).
\end{equation*}
Then for every $(\tau,\mu_{\tau}) \in [0,T] \times \Pcal(B(0,r))$, there exists a solution $\mu(\cdot) \in \AC([\tau,T],\Pcal_c(\R^d))$ to \eqref{eq:CI} such that $\supp(\mu(t)) \subset B(0,R_r)$ and
\begin{equation*}
W_2(\mu(t),\nu(t)) \leq C_r \bigg( W_2(\mu_{\tau},\nu(\tau)) + \INTSeg{\eta_{\nu}(s)}{s}{\tau}{t} \bigg)
\end{equation*}
for all times $t \in [\tau,T]$. Therein, $R_r > 0$ only depends on the magnitudes of $r,\Norm{m(\cdot)}_1$, while $C_r > 0$ only depends on those of $r,\Norm{m(\cdot)}_1,\Norm{l(\cdot)}_1$ and $\Norm{L(\cdot)}_1$.
\end{thm}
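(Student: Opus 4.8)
The plan is to reconstruct the estimate through a Picard-type scheme of successive approximations, exactly as in \cite[Theorem 4]{ContInc}, and to check that under the globally Lipschitz hypotheses \ref{hyp:CI} the resulting constants collapse to the stated dependence. Throughout, the curve $\nu(\cdot)$ plays the role of the zeroth iterate. I would first record an elementary one-step comparison principle for two continuity equations, then iterate it, feeding the defect $\eta_{\nu}(\cdot)$ in only at the first step and relying on a genuine contraction afterwards.

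The analytic core is a differential inequality for $t \mapsto W_2(\mu(t),\nu(t))$ whenever $\mu(\cdot)$ and $\nu(\cdot)$ solve \eqref{eq:CE} with velocity fields $v(\cdot)$ and $w(\cdot)$. Choosing $\gamma_t \in \Gamma_o(\mu(t),\nu(t))$ and inserting the first-order expansions $\Phi^{v}_{(t,t+h)} = \Id + h\,v(t) + o(h)$ and $\Phi^{w}_{(t,t+h)} = \Id + h\,w(t) + o(h)$ into Proposition \ref{prop:WassSuperdiff} (applied once to move $\mu$ and once to move $\nu$) yields, for a.e. $t$, the bound $\tfrac{\dn}{\dn t}\tfrac12 W_2^2(\mu(t),\nu(t)) \leq \INTDom{\langle v(t,x) - w(t,y),\, x-y\rangle}{\R^{2d}}{\gamma_t(x,y)}$. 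The decisive manipulation is to split $v(t,x) - w(t,y) = \big(v(t,x)-v(t,y)\big) + \big(v(t,y)-w(t,y)\big)$, so that the Lipschitz bound \ref{hyp:CI}-$(iii)$ on the \emph{selection} $v(t)$ controls the first term by $l(t)\,W_2^2$, while the second is evaluated at $y \in \supp(\nu(t)) \subset B(0,r)$ and is hence dominated by $\sup_{y \in B(0,r)}|v(t,y)-w(t,y)|\,W_2$. Dividing by $W_2$ and applying Gr\"onwall gives $W_2(\mu(t),\nu(t)) \leq e^{\Norm{l(\cdot)}_1}\big(W_2(\mu(\tau),\nu(\tau)) + \INTSeg{\sup_{B(0,r)}|v(s)-w(s)|}{s}{\tau}{t}\big)$. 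Routing the Lipschitz constant onto the selection is what confines the velocity mismatch to $B(0,r)$, where it is measured by $\eta_{\nu}$, thereby avoiding any need to control $v-w$ on the larger ball $B(0,R_r)$.

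With this in hand I would set $\mu_0(\cdot) := \nu(\cdot)$ and construct $\Lcal^1$-measurable selections iteratively: at the first step pick $v_1(t) \in V(t,\nu(t))$ with $\sup_{B(0,r)}|v_1(t)-w(t)| \leq \eta_{\nu}(t) + \epsilon_1(t)$, a near-minimiser of the distance defining $\eta_{\nu}$, and let $\mu_1(\cdot)$ solve \eqref{eq:CEtau} for $v_1$ with $\mu_1(\tau)=\mu_{\tau}$; for $n \geq 1$ pick $v_{n+1}(t) \in V(t,\mu_n(t))$ as a near-minimiser of $\sup_{\R^d}|v_{n+1}(t)-v_n(t)|$. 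Since $v_n(t) \in V(t,\mu_{n-1}(t))$, hypothesis \ref{hyp:CI}-$(iv)$ furnishes a competitor in $V(t,\mu_n(t))$ within $L(t)\,W_2(\mu_{n-1}(t),\mu_n(t))$, whence $\sup_{\R^d}|v_{n+1}(t)-v_n(t)| \leq L(t)\,W_2(\mu_{n-1}(t),\mu_n(t))$. Feeding this into the comparison principle gives the contraction $W_2(\mu_{n+1}(t),\mu_n(t)) \leq e^{\Norm{l(\cdot)}_1} \INTSeg{L(s)\,W_2(\mu_n(s),\mu_{n-1}(s))}{s}{\tau}{t}$, so the iterates are uniformly Cauchy with the usual factorial bound and converge in $C^0([\tau,T],(\Pcal_c(\R^d),W_2))$ to some $\mu(\cdot)$; the uniform support bound $\supp(\mu_n(t)) \subset B(0,R_r)$ follows from Theorem \ref{thm:Wellposedness} and the sublinear growth \ref{hyp:CI}-$(ii)$, and passes to the limit. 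The selections converge in $L^1$ to some $v(\cdot)$, and applying \ref{hyp:CI}-$(iv)$ once more to approximate $v_n(t)$ by elements of $V(t,\mu(t))$, together with the closedness of the images in \ref{hyp:CI}-$(i)$, identifies $v(t) \in V(t,\mu(t))$, so that $\mu(\cdot)$ solves \eqref{eq:CI}. Finally, the first step contributes $W_2(\mu_1(t),\nu(t)) \leq e^{\Norm{l(\cdot)}_1}\big(W_2(\mu_{\tau},\nu(\tau)) + \INTSeg{\eta_{\nu}(s)}{s}{\tau}{t}\big)$ after letting $\epsilon_1 \to 0$, and telescoping $W_2(\mu(t),\nu(t)) \leq W_2(\mu_1(t),\nu(t)) + \sum_{n \geq 1} W_2(\mu_{n+1}(t),\mu_n(t))$ and summing the contraction bounds yields the announced estimate with $C_r = e^{\Norm{l(\cdot)}_1}\exp\!\big(e^{\Norm{l(\cdot)}_1}\Norm{L(\cdot)}_1\big)$, whose only dependence is on $r, \Norm{m(\cdot)}_1, \Norm{l(\cdot)}_1, \Norm{L(\cdot)}_1$, the radius $r$ and $\Norm{m(\cdot)}_1$ entering through $R_r$.

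I expect the main obstacle to be the selection bookkeeping rather than the estimates. One must produce the measurable near-minimisers above, which requires a measurable-selection theorem for the lower-semicontinuous multifunctions $t \rightrightarrows V(t,\mu_n(t))_{|K}$ of \ref{hyp:CI}-$(i)$, using that $t \mapsto \mu_n(t)$ is continuous so that the composition is measurable; and, more delicately, one must reconcile the two competing demands on the scheme, namely that the first selection realise the distance $\eta_{\nu}$ whereas all later selections instead hug the previous velocity in order to contract. Identifying the $L^1$-limit $v(t)$ as a bona fide element of $V(t,\mu(t))$ is the other subtle point, since \ref{hyp:CI}-$(i)$ only provides lower-semicontinuity; here the one-sided Lipschitz dependence \ref{hyp:CI}-$(iv)$ is precisely what upgrades lower-semicontinuity to the closure property one needs. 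The passage from the single-step inequality of Proposition \ref{prop:WassSuperdiff} to an almost-everywhere differential inequality along the absolutely continuous curves is routine but should be justified at Lebesgue points of $v(\cdot)$ and $w(\cdot)$. All of this is carried out in \cite[Theorem 4]{ContInc}, and the present statement follows by specialising those arguments to the hypotheses \ref{hyp:CI}.
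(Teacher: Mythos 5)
Your proposal takes essentially the same route as the paper, which in fact gives no proof of Theorem \ref{thm:Filippov} at all but recalls it from \cite[Theorem 4]{ContInc}: the successive-approximation scheme you describe --- a first measurable selection nearly realising the defect $\eta_{\nu}$ on $B(0,r)$, later selections chosen through \ref{hyp:CI}-$(iv)$ so as to contract, and a Gr\"onwall comparison with the Lipschitz constant routed onto the selection at each stage --- is precisely the Filippov-type argument carried out there, and your constant $C_r$ has the stated dependence. One caveat worth tightening: your one-line justification of the uniform support bound is too quick, because $v_{n+1}(t) \in V(t,\mu_n(t))$ means the growth bound in \ref{hyp:CI}-$(ii)$ involves $\Mpazo_2(\mu_n(t))$, so naively iterating Theorem \ref{thm:Wellposedness} produces radii $R^{(n)}$ obeying a recursion of the form $R^{(n+1)} = e^{\Norm{m(\cdot)}_1}\big(r + \Norm{m(\cdot)}_1(1+R^{(n)})\big)$, which can diverge; the repair is to use the $W_2$-contraction first to bound $\sup_n \sup_t \Mpazo_2(\mu_n(t))$, deduce uniform supports from that, and obtain the sharper radius $R_r$ (depending only on $r$ and $\Norm{m(\cdot)}_1$) only at the very end, once the limit curve is known to be a genuine solution of \eqref{eq:CI} and the self-consistent a priori estimate applies.
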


In what follows, we will often work with the \textit{reachable} and \textit{solution} sets of the Cauchy problem \eqref{eq:CI}.

\begin{Def}[Reachable and solution sets]
Given $\mu^0 \in \Pcal_c(\R^d)$, we define the solution set of \eqref{eq:CI} as
\begin{equation*}
\begin{aligned}
\Spazo_{[0,T]}(\mu^0) \hspace{-0.075cm} := & \Big\{ \mu(\cdot) \in \AC([0,T],\Pcal_c(\R^d)) ~\text{solution of \eqref{eq:CI}} \\
& \hspace{3.25cm} \text{with $(\tau,\mu_{\tau}) = (0,\mu^0)$} \Big\}
\end{aligned}
\end{equation*}
and similarly, we denote the underlying reachable set at time $t \in [0,T]$ by
\begin{equation}
\label{eq:Reachable}
\Rpazo_t(\mu^0) := \Big\{ \mu(t) ~\,\text{such that $\mu(\cdot) \in \Spazo_{[0,T]}(\mu^0)$} \Big\}.
\end{equation}
\end{Def}

\smallskip

\begin{thm}[Properties of $\Spazo_{[0,T]}(\mu^0)$ and $\Rpazo_t(\mu^0)$]
\label{thm:Compactness}
Let $V : [0,T] \times \Pcal_2(\R^d) \rightrightarrows C^0(\R^d,\R^d)$ be a set-valued map with convex images satisfying hypotheses \ref{hyp:CI}.

Then for each $r > 0$ and any $\mu^0 \in \Pcal(B(0,r))$, the sets $\Spazo_{[0,T]}(\mu^0) \subset C^0([0,T],\Pcal_2(B(0,R_r)))$ and $\Rpazo_t(\mu^0) \subset \Pcal_2(B(0,R_r))$ are compact, with $R_r > 0$ being as in Theorem \ref{thm:Filippov}. Moreover, the reachable sets satisfy the semigroup property
\begin{equation}
\label{eq:SemigroupReach}
\Rpazo_t(\mu^0) = \Rpazo_{t-\tau}(\Rpazo_{\tau}(\mu^0)),
\end{equation}
for all times $0 \leq \tau \leq t \leq T$.
\end{thm}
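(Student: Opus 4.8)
compactness of the solution set $\Spazo_{[0,T]}(\mu^0)$, compactness of the reachable sets $\Rpazo_t(\mu^0)$ (which follows once we have the former, since $\Rpazo_t(\mu^0)$ is the image of $\Spazo_{[0,T]}(\mu^0)$ under the continuous evaluation map $\mu(\cdot) \mapsto \mu(t)$), and finally the semigroup property. First I would fix $r>0$ and $\mu^0 \in \Pcal(B(0,r))$. The uniform support bound $\supp(\mu(t)) \subset B(0,R_r)$ for every solution and every $t$ is immediate from the a priori estimate in Theorem \ref{thm:Filippov} (equivalently from \eqref{eq:ACEst}), so all solutions live in the fixed space $C^0([0,T],\Pcal_2(B(0,R_r)))$, which is compact because $(\Pcal_2(B(0,R_r)),W_2)$ is a compact metric space.

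The heart of the argument is the compactness of $\Spazo_{[0,T]}(\mu^0)$, for which I would use the direct method together with an Arzelà–Ascoli compactness extraction. Take a sequence $(\mu_n(\cdot))_{n\geq 1} \subset \Spazo_{[0,T]}(\mu^0)$, with associated driving velocity fields $v_n(t) \in V(t,\mu_n(t))$ solving \eqref{eq:CEtau}. The uniform modulus-of-continuity estimate $W_2(\mu_n(t),\mu_n(s)) \leq c_r \int_s^t m(\zeta)\,\dn\zeta$ from \eqref{eq:ACEst} provides equicontinuity, and the uniform support bound provides pointwise precompactness; hence by Arzelà–Ascoli a subsequence converges uniformly to some $\mu(\cdot) \in C^0([0,T],\Pcal_2(B(0,R_r)))$, which is automatically in $\AC$ by passing the modulus estimate to the limit. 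It remains to show $\mu(\cdot)$ is itself a solution, i.e. driven by some measurable selection $v(t) \in V(t,\mu(t))$. The velocity fields $v_n(t)$ are, by \ref{hyp:CI}-$(ii)$–$(iii)$, uniformly bounded and uniformly Lipschitz on $B(0,R_r)$, so their restrictions lie in a compact subset of $C^0(B(0,R_r),\R^d)$; I would extract a weak limit of the selections (e.g. a weak-$L^1$ or weak-$*$ limit of $t \mapsto v_n(t)_{|B(0,R_r)}$) and pass to the limit in the distributional formulation of \eqref{eq:CE}, using the uniform convergence $\mu_n \to \mu$ to handle the product term $\langle \nabla_x \phi, v_n \rangle$ against $\mu_n(t)$.

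The main obstacle is precisely the final closure step: verifying that the limit velocity field $v(t)$ satisfies the inclusion $v(t) \in V(t,\mu(t))$. Here the convexity of the images of $V$ is essential, and I would invoke a Mazur-type / convexity argument (a convergence theorem in the spirit of Aubin–Frankowska, \cite{Aubin1990}) to conclude that the weak limit of selections $v_n(t) \in V(t,\mu_n(t))$ lies in the convex closed set $\co \bigcap_{N} \bigcup_{n\geq N} V(t,\mu_n(t))$, which the upper-semicontinuity-type consequence of the Lipschitz estimate \ref{hyp:CI}-$(iv)$ (together with $\mu_n(t) \to \mu(t)$) pins down to be contained in $V(t,\mu(t))$. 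The measurability of the resulting selection $t \mapsto v(t)_{|K}$ follows from standard measurable-selection theorems. For the semigroup identity \eqref{eq:SemigroupReach}, the inclusion $\Rpazo_t(\mu^0) \subset \Rpazo_{t-\tau}(\Rpazo_\tau(\mu^0))$ is obtained by restricting any solution on $[0,T]$ to the subinterval $[\tau,T]$, while the reverse inclusion is obtained by concatenating a solution on $[0,\tau]$ reaching some $\mu_\tau \in \Rpazo_\tau(\mu^0)$ with a solution on $[\tau,T]$ issued from $\mu_\tau$; I would verify that the concatenated curve is absolutely continuous and driven by a measurable selection (the two driving selections glue into one that is measurable on $[0,T]$), so that it belongs to $\Spazo_{[0,T]}(\mu^0)$.
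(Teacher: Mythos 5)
Your proposal is correct and follows essentially the same route as the paper, whose proof is simply a pointer to the arguments of \cite[Theorem 6]{ContInc}: uniform support and equicontinuity bounds from Theorem \ref{thm:Filippov} and \eqref{eq:ACEst}, an Arzel\`a--Ascoli extraction, a Mazur-type convergence-theorem argument exploiting the convex closed images and hypothesis \ref{hyp:CI}-$(iv)$ to show the limit curve is again a solution, and restriction/concatenation of trajectories for the semigroup identity \eqref{eq:SemigroupReach}. Two slips are worth correcting, though neither is load-bearing: the space $C^0([0,T],\Pcal_2(B(0,R_r)))$ is \emph{not} compact (only $\Pcal_2(B(0,R_r))$ is, and it is precisely your Arzel\`a--Ascoli step, using equicontinuity, that produces the needed precompactness), and the set containing the weak limit of the selections should be $\bigcap_{N \geq 1} \co \bigcup_{n \geq N} \{v_n(t)\}$ (intersection of convex hulls of tails, which is what Mazur's lemma gives) rather than the convex hull of the set-theoretic limsup, after which hypothesis \ref{hyp:CI}-$(iv)$ together with closedness and convexity of $V(t,\mu(t))$ still pins the limit selection inside $V(t,\mu(t))$.
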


\begin{proof}
See the arguments in \cite[Theorem 6]{ContInc}.
\end{proof}


\section{Existence of measure curves with prescribed initial velocities}
\label{section:Velocity}

In this section, we establish the existence of solutions to \eqref{eq:CI} with prescribed initial velocities. This fairly non-trivial result will be instrumental in the proof of the viability theorem of Section \ref{section:Viability}.

\begin{thm}[Curves with given initial velocities]
\label{thm:Velocity}
Let $V : [0,T] \times \Pcal_2(\R^d) \rightrightarrows C^0(\R^d,\R^d)$ be a set-value map satisfying hypotheses \ref{hyp:CI}.

Then, there exists a set $\Tcal \subset (0,T)$ of full $\Lcal^1$-measure such that for each $r> 0$, any $(\tau,\mu_{\tau}) \in \Tcal \times \Pcal(B(0,r))$ and $v_{\tau} \in V(\tau,\mu_{\tau})$, there exists a solution $\mu(\cdot) \in \AC([\tau,T],\Pcal_c(\R^d))$ of \eqref{eq:CI} such that
\begin{equation}
\label{eq:ThmVelExp}
W_2 \big(\mu(\tau+h) , (\Id + h v_{\tau})_{\sharp} \mu_{\tau} \big) = o_{\tau}(h),
\end{equation}
for all $h >0$ sufficiently small.
\end{thm}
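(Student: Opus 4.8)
The plan is to obtain the required curve by a Filippov-type perturbation argument, comparing the sought solution of \eqref{eq:CI} with the autonomous flow generated by $v_\tau$. I take for $\Tcal$ the set of Lebesgue points of the integrable multiplier $L(\cdot)$ of \ref{hyp:CI}-$(iv)$; this set has full $\Lcal^1$-measure and, crucially for the order of quantifiers in the statement, does not depend on $r$, $\mu_\tau$ or $v_\tau$. Fix $\tau \in \Tcal$, $\mu_\tau \in \Pcal(B(0,r))$ and $v_\tau \in V(\tau,\mu_\tau)$. As a single element of $C^0(\R^d,\R^d)$, the field $v_\tau$ is globally Lipschitz with $\Lip(v_\tau;\R^d) \leq l(\tau)$ by \ref{hyp:CI}-$(iii)$ and has linear growth by \ref{hyp:CI}-$(ii)$, so, viewed as time-independent, it satisfies \ref{hyp:CE}. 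I then let $\nu(\cdot)$ be the curve generated on $[0,T]$ by this autonomous field and passing through $\mu_\tau$ at time $\tau$, whose support remains in a fixed ball $B(0,\bar r)$ by Theorem \ref{thm:Wellposedness}.

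The first step is to check that this reference curve already matches the prescribed initial velocity to second order. Using the representation $\nu(\tau+h) = (\Phi_{(\tau,\tau+h)}^{v_\tau})_\sharp \mu_\tau$ of \eqref{eq:Semigroup}, the identity $\Phi_{(\tau,\tau+h)}^{v_\tau}(x) - \big(x + h v_\tau(x)\big) = \INTSeg{\big(v_\tau(\Phi_{(\tau,s)}^{v_\tau}(x)) - v_\tau(x)\big)}{s}{\tau}{\tau+h}$, the Lipschitz bound on $v_\tau$, and the elementary estimate $|\Phi_{(\tau,s)}^{v_\tau}(x) - x| = O(s-\tau)$ on the compact support of $\mu_\tau$, one gets $\Norm{\Phi_{(\tau,\tau+h)}^{v_\tau} - (\Id + h v_\tau)}_{L^2(\mu_\tau)} = O(h^2)$. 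Since $(\Phi_{(\tau,\tau+h)}^{v_\tau}, \Id + h v_\tau)_\sharp \mu_\tau$ is admissible in the definition of $W_2$, this yields
\begin{equation*}
W_2\big(\nu(\tau+h),(\Id+hv_\tau)_\sharp\mu_\tau\big) = O(h^2) = o_\tau(h),
\end{equation*}
so by the triangle inequality it only remains to build a genuine solution of \eqref{eq:CI} that is $o_\tau(h)$-close to $\nu(\cdot)$ near $\tau$.

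To produce such a solution I apply the Filippov estimates of Theorem \ref{thm:Filippov} with reference field $w \equiv v_\tau$, radius $\bar r$ and reference curve $\nu(\cdot)$. As $\mu_\tau = \nu(\tau)$, this provides $\mu(\cdot) \in \AC([\tau,T],\Pcal_c(\R^d))$ solving \eqref{eq:CI} with $W_2(\mu(\tau+h),\nu(\tau+h)) \leq C_{\bar r}\INTSeg{\eta_\nu(s)}{s}{\tau}{\tau+h}$, where $\eta_\nu(s) = \dist_{C^0(B(0,\bar r),\R^d)}(v_\tau; V(s,\nu(s))_{|B(0,\bar r)})$, so everything reduces to showing $\INTSeg{\eta_\nu(s)}{s}{\tau}{\tau+h} = o_\tau(h)$. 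I would estimate $\eta_\nu(s)$ by splitting the measure argument: the lower-semicontinuity in time of \ref{hyp:CI}-$(i)$ at the frozen measure $\mu_\tau$ makes $\rho(s) := \dist_{C^0(B(0,\bar r),\R^d)}(v_\tau; V(s,\mu_\tau)_{|B(0,\bar r)})$ tend to $0$ as $s \to \tau^+$, and choosing $w_s \in V(s,\mu_\tau)$ nearly realising $\rho(s)$, \ref{hyp:CI}-$(iv)$ supplies $\tilde w_s \in V(s,\nu(s))$ with $\sup_x |w_s - \tilde w_s| \leq L(s) W_2(\mu_\tau,\nu(s))$, whence
\begin{equation*}
\eta_\nu(s) \leq \rho(s) + c_{\bar r}\, L(s)\INTSeg{m(\zeta)}{\zeta}{\tau}{s},
\end{equation*}
after invoking the modulus of continuity \eqref{eq:ACEst} of $\nu(\cdot)$. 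The first term integrates to $o_\tau(h)$ because $\rho(s) \to 0$ at $\tau$; the second is bounded by $c_{\bar r}\big(\INTSeg{L(s)}{s}{\tau}{\tau+h}\big)\big(\INTSeg{m(\zeta)}{\zeta}{\tau}{\tau+h}\big)$, which after division by $h$ tends to $L(\tau)\cdot 0 = 0$ precisely because $\tau$ is a Lebesgue point of $L$.

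I expect the control of $\INTSeg{\eta_\nu(s)}{s}{\tau}{\tau+h}$ to be the main obstacle. The difficulty is that \ref{hyp:CI}-$(i)$ only yields lower-semicontinuity in $t$ at a frozen measure, hence controls the distance of $v_\tau$ to $V(s,\mu_\tau)$ but not directly to $V(s,\nu(s))$; bridging this gap forces one to combine the quantitative $W_2$-Lipschitz estimate \ref{hyp:CI}-$(iv)$ with the sublinear modulus of continuity of $\nu(\cdot)$, and it is exactly the Lebesgue-point property of $\tau$ that makes the resulting averaged quantities vanish. Once the reference curve is chosen to be the autonomous flow of $v_\tau$, so that the prescribed transport is matched to order $O(h^2)$ and the Filippov error is reduced to an averaging statement, the remaining manipulations are routine.
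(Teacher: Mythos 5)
Your strategy coincides with the paper's own proof: take as reference curve the flow $\nu(\cdot)$ of the autonomous field $v_\tau$ through $\mu_\tau$, show by linearising the flow that $W_2\big(\nu(\tau+h),(\Id+hv_\tau)_\sharp\mu_\tau\big)=o_\tau(h)$ via the coupling $\big(\Phi^{v_\tau}_{(\tau,\tau+h)},\Id+hv_\tau\big)_\sharp\mu_\tau$, then invoke Theorem \ref{thm:Filippov} with this reference curve and reduce everything to $\INTSeg{\eta_\nu(s)}{s}{\tau}{\tau+h}=o_\tau(h)$, which you establish exactly as the paper does: lower semicontinuity in time at the frozen measure (hypothesis \ref{hyp:CI}-$(i)$), the passage from $V(s,\mu_\tau)$ to $V(s,\nu(s))$ via \ref{hyp:CI}-$(iv)$, and the modulus of continuity of $\nu(\cdot)$. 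All of these steps are sound, and your identification of the averaging of $\eta_\nu$ as the crux is accurate.

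There is, however, one genuine (though easily repaired) gap, and it sits precisely in the object the theorem quantifies over: the set $\Tcal$. You take $\Tcal$ to be only the Lebesgue points of $L(\cdot)$, and then assert that $v_\tau$ is globally Lipschitz with constant $l(\tau)$ ``by \ref{hyp:CI}-$(iii)$'' and of linear growth ``by \ref{hyp:CI}-$(ii)$''. But hypotheses $(ii)$ and $(iii)$ hold only for $\Lcal^1$-almost every $t$, and a Lebesgue point of $L$ need not lie in the full-measure sets of times at which they are valid. At a time $\tau$ outside those sets, an element $v_\tau\in V(\tau,\mu_\tau)$ need not satisfy hypotheses \ref{hyp:CE} at all: the reference curve $\nu(\cdot)$ may then fail to exist or be unique, the $O(h^2)$ linearisation fails, and the bound $W_2(\mu_\tau,\nu(s))\leq c\, m(\tau)(s-\tau)$ that feeds your bridge estimate collapses. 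The fix is exactly the paper's definition: take $\Tcal$ to be the intersection of the Lebesgue points of $m(\cdot),l(\cdot),L(\cdot)$ with the full-measure sets of times at which $(ii)$, $(iii)$ and $(iv)$ hold; since those exceptional null sets are independent of $\mu$ and $v$, this intersection still has full measure and still does not depend on $r$, $\mu_\tau$ or $v_\tau$, so the quantifier order you rightly worried about is preserved. A second, harmless imprecision: since the driving field $v_\tau$ is autonomous with bound $m(\tau)\big(1+|x|+\Mpazo_2(\mu_\tau)\big)$, the modulus of continuity of $\nu(\cdot)$ is $W_2(\mu_\tau,\nu(s))\leq c\,m(\tau)(s-\tau)$ rather than $c\INTSeg{m(\zeta)}{\zeta}{\tau}{s}$; your averaging argument goes through with either bound (with the correct one, absolute continuity of the integral of $L$ already suffices, and the Lebesgue-point property of $L$ is not even needed at that step).
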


\begin{proof}
First and foremost, let it be noted that by applying Theorem \ref{thm:Filippov} with a constant curve of measures $\nu(\cdot) \equiv \nu \in \Pcal(B(0,r))$, there exists $R_r > 0$ such that
\begin{equation*}
\supp(\mu(t)) \subset B(0,R_r),
\end{equation*}
for any solution of \eqref{eq:CI} starting from $\mu_{\tau} \in \Pcal(B(0,r))$ at time $\tau \in [0,T]$. We also define $\Tcal \subset (0,T)$ of full $\Lcal^1$-measure as the intersection of the sets of Lebesgue points (see e.g. \cite[Corollary 2.23]{AmbrosioFuscoPallara}) of $m(\cdot),l(\cdot)$ and $L(\cdot)$ at which hypotheses \ref{hyp:CI}-$(ii)$, $(iii)$ and $(iv)$ hold.

Fix $\tau \in \Tcal$ and observe that, as a consequence of hypotheses \ref{hyp:CI}, every $v_{\tau} \in V(\tau,\mu_{\tau})$ satisfies hypotheses \ref{hyp:CE}. Whence, there exists a unique solution $\nu(\cdot) \in \AC([\tau,T],\Pcal_c(\R^d))$ to the Cauchy problem
\begin{equation*}
\left\{
\begin{aligned}
& \partial_t \nu(t) + \Div_x (v_{\tau} \nu(t)) = 0, \\
& \nu(\tau) = \mu_{\tau},
\end{aligned}
\right.
\end{equation*}
which can be represented as
\begin{equation*}
\nu(t) = \Phi^{v_{\tau}}_{(\tau,t)}(\cdot)_{\sharp} \mu_{\tau}.
\end{equation*}
Under hypotheses \ref{hyp:CE}, it follows from standard linearisation techniques for characteristic flows (see e.g. \cite[Appendix A]{SemiSensitivity}) that
\begin{equation}
\label{eq:Smallo1}
\Phi_{(\tau,\tau+h)}^{v_{\tau}}(x) = x + h v_{\tau}(x) + o_{\tau,x}(h),
\end{equation}
where $\INTSeg{\sup_{x \in B(0,R)}|o_{\tau,x}(h)|}{\tau}{0}{T} = o_{R}(h)$ for each $R>0$. Furthermore, upon noticing that
\begin{equation*}
\Big( \Id + h v_{\tau} , \Phi_{(\tau,\tau+h)}^{v_{\tau}} \Big)_{\raisebox{4pt}{$\scriptstyle{\sharp}$}} \mu_{\tau} \in \Gamma \Big( (\Id + h v_{\tau})_{\sharp} \mu_{\tau},\nu(\tau+h) \Big)
\end{equation*}
we can easily get from \eqref{eq:Smallo1} the following distance estimate
\begin{equation}
\label{eq:Smallo1bis}
\begin{aligned}
W_2 \big( \nu(\tau+h),&(\Id + hv_{\tau})_{\sharp} \mu_{\tau}  \big) \\
& \leq \, \NormL{\Phi_{(\tau,\tau+h)}^{v_{\tau}} - \Id - h v_{\tau}}{2}{\mu_{\tau}} = o_{\tau}(h).
\end{aligned}
\end{equation}
Setting $K := B(0,R_r)$, observe that under hypothesis \ref{hyp:CI}-$(i)$, the set-valued map $t \in [0,T] \rightrightarrows V(t,\mu_{\tau})_{|K}$ is lower-semicontinuous. Thus, for each $\varepsilon > 0$, there exists $\delta > 0$ such that
\begin{equation*}
v_{\tau} \in V(t,\mu_{\tau})_{|K} \, + \varepsilon \B_{C^0(K,\R^d)},
\end{equation*}
for all times $t \in [\tau,\tau+\delta]$. In turn, by using hypothesis \ref{hyp:CI}-$(iv)$ in the previous identity, it further holds that
\begin{equation}
\label{eq:vtauEst1}
v_{\tau} \in V(t,\nu(t))_{|K} \, + \Big( \varepsilon + L(t) W_2(\mu_{\tau},\nu(t)) \Big) \B_{C^0(K,\R^d)}.
\end{equation}
Noticing that by construction, the curve $\nu(\cdot)$ satisfies
\begin{equation*}
W_2(\mu_{\tau},\nu(t)) \leq c_r m(\tau) (t-\tau)
\end{equation*}
for all times $t \in [\tau,T]$, it then follows from \eqref{eq:vtauEst1} that
\begin{equation}
\label{eq:vtauEst2}
\begin{aligned}
\INTSeg{\dist_{C^0(K,\R^d)} \Big(&  v_{\tau} \, ; V(t,\nu(t))_{|K} \Big)}{t}{\tau}{\tau+h} \\
& \leq \varepsilon h + c_r m(\tau) \, h \INTSeg{L(t)}{t}{\tau}{\tau+h}
\end{aligned}
\end{equation}
for each $h > 0$ sufficiently small. By applying Theorem \ref{thm:Filippov} in conjunction with \eqref{eq:vtauEst2} while recalling that $\tau \in \Tcal$ is a Lebesgue point of $L(\cdot)$, we obtain the existence of a solution $\mu(\cdot) \in \AC([\tau,T],\Pcal_c(\R^d))$ of \eqref{eq:CI} such that
\begin{equation}
\label{eq:Smallo2}
W_2(\mu(\tau+h),\nu(\tau+h)) \leq C_r \varepsilon h + o_{\tau}(h),
\end{equation}
for every $\varepsilon > 0$, whenever $h>0$ is sufficiently small. Whence, by merging the estimates of \eqref{eq:Smallo1bis} and \eqref{eq:Smallo2}, we can finally conclude that the curve $\mu(\cdot) \in \AC([\tau,T],\Pcal_c(\R^d))$ satisfies \eqref{eq:ThmVelExp}.
\end{proof}


\section{Viability for continuity inclusions}
\label{section:Viability}

In this section, we prove a general viability result for \eqref{eq:CI} involving the \textit{contingent cone} to the set of constraints.

\begin{Def}[Contingent cones]
The contingent cone to a set $\Qpazo \subset \Pcal_2(\R^d)$ at some $\mu \in \Qpazo$ is defined by
\begin{equation*}
\label{eq:AdjacentCone}
\begin{aligned}
T_{\Qpazo}(\mu) := \bigg\{ & \xi \in L^2(\R^d,\R^d;\mu) ~\, \textnormal{s.t. there exists $h_i \to 0^+$} \\
& \, \text{for which}~ \dist_{\Pcal_2} \big( (\Id + h_i \xi)_{\sharp} \mu \, ; \Qpazo \big) = o(h_i) \bigg\}.
\end{aligned}
\end{equation*}
\end{Def}

In what follows, we shall say that a subset $\Qpazo \subset \Pcal_2(\R^d)$ is \textit{proper} if $\Qpazo  \cap \B_{\Pcal_2}(\mu,r)$ is compact for every $\mu \in \Qpazo$ and each $r > 0$.

\begin{thm}[Viability for proper constraints]
\label{thm:Viability}
Let $V : [0,T] \times \Pcal_2(\R^d) \rightrightarrows C^0(\R^d,\R^d)$ be a set-valued map with convex images satisfying hypotheses \ref{hyp:CI}, and $\Qpazo \subset \Pcal_2(\R^d)$ be a proper set such that
\begin{equation*}
V(t,\nu) \cap \co T_{\Qpazo}(\nu) \neq \emptyset
\end{equation*}
for $\Lcal^1$-almost every $t \in [0,T]$ and each $\nu \in \Qpazo$. Then for each $\mu^0 \in \Qpazo \cap \Pcal_c(\R^d)$, there exists a curve $\mu(\cdot) \in \Spazo_{[0,T]}(\mu^0)$ such that $\mu(t) \in \Qpazo$ for all times $t \in [0,T]$.
\end{thm}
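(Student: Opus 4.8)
The plan is to follow the classical viability scheme---build a sequence of genuine solutions of \eqref{eq:CI} that are increasingly close to $\Qpazo$, then extract a viable limit using the compactness of the solution set. First I would fix $\mu^0 \in \Qpazo \cap \Pcal_c(\R^d)$ with $\supp(\mu^0) \subset B(0,r)$ and reduce to a compact ambient space: by Theorem \ref{thm:Filippov} every solution issued from $\mu^0$ remains supported in $B(0,R_r)$, so all curves under consideration live in the $W_2$-compact set $\Kcal := \Pcal(B(0,R_r))$, and since $\Qpazo$ is proper the trace $\Qpazo \cap \Kcal$ is compact, in particular closed. The goal then becomes to produce curves $\mu_n(\cdot) \in \Spazo_{[0,T]}(\mu^0)$ satisfying $\sup_{t \in [0,T]} \dist_{\Pcal_2}(\mu_n(t) \, ; \Qpazo) \leq \epsilon_n$ with $\epsilon_n \to 0^+$.

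The core of the argument is an Euler-type construction. I would partition $[0,T]$ with a vanishing mesh, choosing the nodes $t_k$ inside the full-measure set of times on which both the conclusion of Theorem \ref{thm:Velocity} applies (i.e.\ $t_k \in \Tcal$, which is possible since $\Tcal$ is dense) and the tangency condition holds. Inductively, given $\nu_k \in \Qpazo \cap \Kcal$ approximating $\mu_n(t_k)$, the tangency hypothesis yields a velocity $v_k \in V(t_k,\nu_k) \cap \co T_{\Qpazo}(\nu_k)$; Theorem \ref{thm:Velocity} then provides a genuine solution arc of \eqref{eq:CI} on $[t_k,t_{k+1}]$ emanating from $\nu_k$ whose endpoint equals $(\Id + (t_{k+1}-t_k)v_k)_{\sharp}\nu_k$ up to an $o(\cdot)$ error. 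One re-anchors by selecting $\nu_{k+1} \in \Qpazo \cap \Kcal$ at minimal $W_2$-distance to this endpoint and concatenates the arcs. The resulting curve solves \eqref{eq:CI} away from the nodes and exhibits only small jumps at the nodes; invoking Theorem \ref{thm:Filippov} with the concatenation as reference curve---its jumps and first-order defects forming a small $L^1$ perturbation---replaces it by a bona fide element $\mu_n(\cdot) \in \Spazo_{[0,T]}(\mu^0)$ lying uniformly close to it, hence uniformly close to $\Qpazo$.

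The main obstacle is precisely the reconciliation of the convexified contingent cone $\co T_{\Qpazo}$ appearing in the hypothesis with the first-order proximity estimate needed at each Euler step: for a genuine contingent direction $\xi \in T_{\Qpazo}(\nu)$ the definition directly furnishes, along some $h_i \to 0^+$, a point of $\Qpazo$ lying $o(h_i)$-close to $(\Id + h_i \xi)_{\sharp}\nu$, whereas a point of the closed convex hull carries no such single-step guarantee. I would bridge this gap by a chattering argument: writing $v_k$ as a near-convex-combination $\sum_j \alpha_j \xi_{k,j}$ of contingent vectors (by definition of $\co$), I subdivide $[t_k,t_{k+1}]$ further and move successively along each genuinely tangent direction $\xi_{k,j}$ for the time-fraction $\alpha_j$, re-anchoring in $\Qpazo$ after every such sub-step so that each sub-endpoint stays $o(\cdot)$-close to $\Qpazo$, while a first-order expansion of the composed flows shows the net displacement over $[t_k,t_{k+1}]$ to be $(t_{k+1}-t_k)v_k + o(\cdot)$. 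Making the per-step proximity truly $o(\textnormal{mesh})$ uniformly in $k$ is the delicate point, since the contingent estimates are anchored at drifting base points; here I would exploit the compactness of $\Qpazo \cap \Kcal$ together with hypothesis \ref{hyp:CI}-$(iv)$, which transports the selected velocities between nearby base measures at a controlled cost, to obtain an equi-small modulus.

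Finally, I would pass to the limit. Since the curves $\mu_n(\cdot)$ all belong to the compact set $\Spazo_{[0,T]}(\mu^0) \subset C^0([0,T],\Pcal_2(B(0,R_r)))$ of Theorem \ref{thm:Compactness}, a subsequence converges uniformly to some $\mu^{\star}(\cdot) \in \Spazo_{[0,T]}(\mu^0)$, which is thus a genuine solution of \eqref{eq:CI} issued from $\mu^0$. Because $\dist_{\Pcal_2}(\mu_n(t) \, ; \Qpazo) \leq \epsilon_n \to 0^+$ uniformly in $t$ and $\Qpazo \cap \Kcal$ is closed, the uniform limit satisfies $\mu^{\star}(t) \in \Qpazo$ for every $t \in [0,T]$, which establishes the desired viability.
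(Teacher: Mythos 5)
Your final limiting step (compactness of $\Spazo_{[0,T]}(\mu^0)$ plus closedness of $\Qpazo \cap \Pcal(B(0,R_r))$) is sound, but the Euler--chattering construction of the approximately viable curves $\mu_n(\cdot)$ breaks down at exactly the point you flag as delicate, and for reasons the stated hypotheses cannot repair. First, the paper's contingent cone is a liminf-type object: $\xi \in T_{\Qpazo}(\nu)$ only guarantees $\dist_{\Pcal_2}\big((\Id + h_i \xi)_{\sharp}\nu \, ; \Qpazo\big) = o(h_i)$ along \emph{some} sequence $h_i \to 0^+$ depending on $(\nu,\xi)$. It gives no estimate at a prescribed duration, so ``moving along $\xi_{k,j}$ for the time-fraction $\alpha_j$'' of a fixed mesh is not licensed by the definition; and if you instead let each direction's own sequence dictate the sub-steps, you can no longer realize the prescribed convex weights $\alpha_j$. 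Second, after re-anchoring at a nearest point $\nu' \in \Qpazo$, the directions $\xi_{k,j}$ --- contingent at the \emph{previous} base point --- need not be contingent, or even approximately so, at $\nu'$: the multifunction $\nu \rightrightarrows T_{\Qpazo}(\nu)$ enjoys no lower semicontinuity in general, hypothesis \ref{hyp:CI}-$(iv)$ constrains the dynamics $V$ and says nothing about the geometry of $\Qpazo$, and compactness of $\Qpazo \cap \Pcal(B(0,R_r))$ yields no equi-small modulus for these $o(\cdot)$ terms. (A third, more minor, issue: the $o_{\tau}(h)$ of Theorem \ref{thm:Velocity} depends on $\tau$, whereas your scheme needs per-step defects that are $o(T/2^n)$ uniformly over the $2^n$ nodes of level $n$, which change with $n$.) Consequently the per-step defect cannot be shown to be small relative to the mesh, and the Filippov repair has nothing summably small to work with.

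The paper avoids all of this by never attempting to realize tangent displacements dynamically. It introduces $g(t) := \dist_{\Pcal_2}\big(\Rpazo_t(\mu^0) \, ; \Qpazo_R\big)$, where $\Qpazo_R := \Qpazo \cap \B_{\Pcal_2}(\delta_0,R)$ is a compact truncation, and shows $g \equiv 0$ by contradiction: at a suitable $\tau \in \Tcal \cap \Dcal_g$, with $\mu_{\tau} \in \Rpazo_{\tau}(\mu^0)$ and $\nu_{\tau} \in \Qpazo_R$ realizing $g(\tau)$, the contingent condition at $\nu_{\tau}$ is exploited only in the limit $i \to +\infty$ along its own sequence $h_i$, producing the integral inequality \eqref{eq:ViabEst1}; Theorem \ref{thm:Velocity} and Proposition \ref{prop:WassSuperdiff} then bound the forward difference quotient of $\tfrac{1}{2}g^2$, leading to \eqref{eq:Gest4}. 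The decisive observation is that the right-hand side of \eqref{eq:Gest4} is affine and $L^2(\R^d,\R^d;\nu_{\tau})$-continuous in $\xi_{\tau}$, so its validity on $T_{\Qpazo}(\nu_{\tau})$ extends automatically to $\co T_{\Qpazo}(\nu_{\tau})$; taking $\xi_{\tau} = w_{\tau} \in V(\tau,\nu_{\tau}) \cap \co T_{\Qpazo}(\nu_{\tau})$ gives $\dot g(\tau) \leq (l(\tau)+L(\tau)) g(\tau)$, and Gr\"onwall yields the contradiction, after which the viable curve is extracted by a dyadic/compactness argument similar to your last step. In short, the convex hull in the hypothesis is absorbed by the linearity of an integral functional rather than by chattering; this linearization is the missing idea in your proposal, and without it the hypothesis $V \cap \co T_{\Qpazo} \neq \emptyset$ (as opposed to the much stronger $V \cap T_{\Qpazo} \neq \emptyset$) cannot be exploited.
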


\begin{proof}
The proof of this result relies on an estimate ``\`a la Gr\"onwall'' on the distance between $\Rpazo_t(\mu^0)$ defined as in \eqref{eq:Reachable} and $\Qpazo$. Consider $\mu^0 \in \Qpazo \cap \Pcal_c(\R^d)$ and fix $r>0$ such that $\mu^0 \in \Pcal(B(0,r))$. Let then $R_r \geq r > 0$ be as in Theorem \ref{thm:Filippov} and choose $R \geq R_r > 0$ in such a way that
\begin{equation}
\label{eq:Rdef}
\begin{aligned}
\dist_{\Pcal_2} \Big( \Rpazo_t(\mu^0) \, & ; \partial \B_{\Pcal_2}(\delta_0,R) \Big) \\
& \, \geq \dist_{\Pcal_2} \big( \Rpazo_t(\mu^0) \, ; \Qpazo \big) + 1.
\end{aligned}
\end{equation}
for all times $t \in [0,T]$. We then define the restricted constraints set $\Qpazo_R := \Qpazo \cap \B_{\Pcal_2}(\delta_0,R)$, which is compact since $\Qpazo$ is proper, along with the distance function
\begin{equation}
\label{eq:Gdef}
g : t \in [0,T] \mapsto \dist_{\Pcal_2} \Big( \Rpazo_t(\mu^0) \, ; \Qpazo_R \Big).
\end{equation}
It can be checked easily that $g(\cdot) \in \AC([0,T],\R_+)$, and we denote by $\Dcal_g \subset (0,T)$ the set of full $\Lcal^1$-measure where it is differentiable.


\paragraph*{Step 1 -- Distance estimate} Noticing at first that $g(0) = 0$ by construction, we claim that $g(\cdot) \equiv 0$ on $[0,T]$. Indeed otherwise, by the continuity of $g(\cdot)$, there exists some $t \in [0,T]$ and $\delta > 0$ such that $g(t) = 0$ and while $g(\tau) > 0$ for each $\tau \in (t,t+\delta)$. Let $\tau \in (t,t+\delta) \cap \Tcal \cap \Dcal_g$ with $\Tcal \subset (0,T)$ being defined as in Theorem \ref{thm:Velocity}, and observe that since $\Rpazo_{\tau}(\mu^0)$ and $\Qpazo_R$ are both compact -- the former by Theorem \ref{thm:Compactness} and the latter by construction --, it then holds that
\begin{equation*}
g(\tau) = W_2(\mu_{\tau},\nu_{\tau}),
\end{equation*}
for some $\mu_{\tau} \in \Rpazo_{\tau}(\mu^0)$ and $\nu_{\tau} \in \Qpazo_R$. Moreover by \eqref{eq:Rdef}, one necessarily has that
\begin{equation*}
\nu_{\tau} \in \Qpazo \cap \textnormal{int} \big( \B_{\Pcal_2}(\delta_0,R) \big).
\end{equation*}
Thus $T_{\Qpazo_R}(\nu_{\tau}) = T_{\Qpazo}(\nu_{\tau})$ and for each $\xi_{\tau} \in T_{\Qpazo}(\nu_{\tau})$, there exists a sequence $h_i \to 0^+$ such that
\begin{equation}
\begin{aligned}
W_2 \big( \mu_{\tau},(\Id+h_i\xi_{\tau})_{\sharp} \nu_{\tau} \big) & \geq \dist_{\Pcal_2}(\mu_{\tau} \, ; \Qpazo_R) + o_{\tau}(h_i) \\
& = W_2(\mu_{\tau},\nu_{\tau}) + o_{\tau}(h_i).
\end{aligned}
\end{equation}
Hence, by fixing an arbitrary $\gamma_{\tau} \in \Gamma_o(\mu_{\tau},\nu_{\tau})$ and applying Proposition \ref{prop:WassSuperdiff} with $(\pi^2,\pi^1)_{\sharp} \gamma_{\tau} \in \Gamma_o(\nu_{\tau},\mu_{\tau})$, it holds
\begin{equation*}
\begin{aligned}
o_{\tau}(1) & \leq \tfrac{1}{2h_i} W_2^2 \big( \mu_{\tau},(\Id+h_i\xi_{\tau})_{\sharp} \nu_{\tau} \big) - \tfrac{1}{2h_i} W_2^2(\mu_{\tau},\nu_{\tau}) \\
& \leq \INTDom{\langle \xi_{\tau}(y) , y-x \rangle}{\R^{2d}}{\gamma_{\tau}(x,y)} + h_i \NormL{\xi_{\tau}}{2}{\nu_{\tau}},
\end{aligned}
\end{equation*}
and we subsequently obtain upon letting $i \to +\infty$ that
\begin{equation}
\label{eq:ViabEst1}
\INTDom{\langle \xi_{\tau}(y) , x-y \rangle}{\R^{2d}}{\gamma_{\tau}(x,y)} \leq 0,
\end{equation}
for each $\xi_{\tau} \in T_{\Qpazo}(\nu_{\tau})$ and all $\gamma_{\tau} \in \Gamma_o(\mu_{\tau},\nu_{\tau})$.

On another note, by Theorem \ref{thm:Velocity}, there exists for every $v_{\tau} \in V(\tau,\mu_{\tau})$ a solution $\mu(\cdot)$ of \eqref{eq:CI} such that
\begin{equation*}
W_2 \big( \mu(\tau+h) , (\Id + h v_{\tau})_{\sharp} \mu_{\tau} \big) = o_{\tau}(h)
\end{equation*}
for any small $h > 0$. Whence, we can estimate the forward difference quotient of $\tfrac{1}{2}g^2(\cdot)$ at $\tau \in \Tcal \cap \Dcal_g$ as
\begin{equation}
\label{eq:Gest1}
\begin{aligned}
\tfrac{1}{2h} \big( g^2(\tau+h) - g^2(\tau)\big) & \leq \tfrac{1}{2h} W_2^2 \big( (\Id + h v_{\tau})_{\sharp} \mu_{\tau} , \nu_{\tau} \big) \\
& \hspace{0.8cm} - \tfrac{1}{2h} W_2^2(\mu_{\tau},\nu_{\tau}) + o_{\tau}(1),
\end{aligned}
\end{equation}
where we used the fact that $\nu_{\tau} \in \Qpazo_R$. Besides, by Proposition \ref{prop:WassSuperdiff}, it holds for each $\gamma_{\tau} \in \Gamma_o(\mu_{\tau},\nu_{\tau})$ that
\begin{equation}
\label{eq:ViabEst2}
\begin{aligned}
\tfrac{1}{2h} W_2^2 \big( (&\Id + h v_{\tau})_{\sharp} \mu_{\tau} , \nu_{\tau} \big) - \tfrac{1}{2h} W_2^2(\mu_{\tau},\nu_{\tau}) \\
& \leq \INTDom{\langle v_{\tau}(x) , x-y \rangle}{\R^{2d}}{\gamma_{\tau}(x,y)} + h \NormL{v_{\tau}}{2}{\mu_{\tau}}.
\end{aligned}
\end{equation}
Thus, upon merging \eqref{eq:Gest1} and \eqref{eq:ViabEst2} while letting $h \to 0^+$, we further obtain
\begin{equation}
\label{eq:Gest2}
g(\tau) \dot g(\tau) \leq \INTDom{\langle v_{\tau}(x) , x-y \rangle}{\R^{2d}}{\gamma_{\tau}(x,y)},
\end{equation}
for any $\tau \in (t,t+\delta) \cap \Tcal \cap \Dcal_g$ and each $\gamma_{\tau} \in \Gamma_o(\mu_{\tau},\nu_{\tau})$. Then, it follows by inserting crossed terms in \eqref{eq:Gest2} that
\begin{equation}
\label{eq:Gest3}
\begin{aligned}
g(\tau) \dot g(\tau) & \leq \INTDom{\langle v_{\tau}(x) - v_{\tau}(y) , x-y \rangle}{\R^{2d}}{\gamma_{\tau}(x,y)} \\
& \hspace{0.45cm} + \INTDom{\langle v_{\tau}(y) - \xi_{\tau}(y) , x-y \rangle}{\R^{2d}}{\gamma_{\tau}(x,y)} \\
& \hspace{0.45cm} + \INTDom{\langle \xi_{\tau}(y) , x-y \rangle}{\R^{2d}}{\gamma_{\tau}(x,y)} \\
& \hspace{-0.45cm} \leq l(\tau) g^2(\tau) + \INTDom{\langle v_{\tau}(y) - \xi_{\tau}(y) , x-y \rangle}{\R^{2d}}{\gamma_{\tau}(x,y)},
\end{aligned}
\end{equation}
where we used \eqref{eq:Gdef}, \eqref{eq:ViabEst1} and hypothesis \ref{hyp:CI}-$(iii)$. Recall now that by hypothesis \ref{hyp:CI}-$(iv)$ along with the definition of $\Tcal$, there exists for every $w_{\tau} \in V(\tau,\nu_{\tau})$ some other element $v_{\tau} \in V(\tau,\mu_{\tau})$ for which
\begin{equation*}
\sup_{x \in \R^d} |v_{\tau}(x) - w_{\tau}(x)| \leq L(\tau) W_2(\mu_{\tau},\nu_{\tau}).
\end{equation*}
Observing that \eqref{eq:Gest3} holds for every $v_{\tau} \in V(\tau,\mu_{\tau})$, this further implies that
\begin{equation}
\label{eq:Gest4}
\begin{aligned}
g(\tau) \dot g(\tau) & \leq \big( l(\tau) + L(\tau) \big)g^2(\tau) \\
& \hspace{0.45cm} + \INTDom{\langle w_{\tau}(y) - \xi_{\tau}(y) , x-y \rangle}{\R^{2d}}{\gamma_{\tau}(x,y)},
\end{aligned}
\end{equation}
for all $\xi_{\tau} \in T_{\Qpazo}(\nu_{\tau})$ and each $w_{\tau} \in V(\tau,\nu_{\tau})$. Noticing in turn that the right-hand side of the previous identity is both linear and continuous with respect to $\xi_{\tau} \in L^2(\R^d,\R^d;\nu_{\tau})$, one can deduce that \eqref{eq:Gest4} in fact holds for each $\xi_{\tau} \in \co T_{\Qpazo}(\nu_{\tau})$. Choosing in particular
\begin{equation*}
\xi_{\tau} = w_{\tau} \in V(\tau,\nu_{\tau}) \cap \co T_{\Qpazo}(\nu_{\tau}) \neq \emptyset,
\end{equation*}
we finally recover the differential inequality
\begin{equation*}
\dot g(\tau) \leq (l(\tau) + L(\tau)) g(\tau)
\end{equation*}
that holds for all $\tau \in (t,t+\delta) \cap \Tcal \cap \Dcal_g$. Since $g(t) = 0$, it follows from Gr\"onwall's lemma that $g(\cdot) \equiv 0$ on $(t,t+\delta)$, which contradicts our working assumption.


\paragraph*{Step 2 -- Existence of a viable curve}

As a consequence of Step 1, it holds that
\begin{equation*}
\Rpazo_t(\mu^0) \cap \Qpazo \neq \emptyset
\end{equation*}
for all times $t \in [0,T]$. For each $n \geq 1$, consider the dyadic subdivision $[0,T] = \cup_{k=0}^{2^n-1} [t_k,t_{k+1}]$ of the interval $[0,T]$, where $t_k := kT/2^n$. By using the semigroup property \eqref{eq:SemigroupReach} of the reachable sets, one can prove by a simple induction argument that there exists a solution $\mu_n(\cdot) \in \Spazo_{[0,T]}(\mu^0)$ such that
\begin{equation}
\label{eq:PointwiseInc}
\mu_n(t_k) \in \Qpazo,
\end{equation}
for all $k \in \{0,\dots,2^n\}$ and each $n \geq 1$.

By repeating this process for arbitrary integers $n \geq 1$, we can find a sequence of trajectories $(\mu_n(\cdot)) \subset \Spazo_{[0,T]}(\mu^0)$ for which \eqref{eq:PointwiseInc} holds. By the compactness result of Theorem \ref{thm:Compactness}, there exists a curve $\mu(\cdot) \in \Spazo_{[0,T]}(\mu^0)$ such that
\begin{equation*}
\sup_{t \in [0,T]} W_2(\mu_n(t),\mu(t)) ~\underset{n \to +\infty}{\longrightarrow}~ 0,
\end{equation*}
along an adequate subsequence. By construction, the limit curve $\mu(\cdot) \in \AC([0,T],\Pcal_c(\R^d))$ is such that
\begin{equation*}
\mu(t_k) \in \Qpazo
\end{equation*}
for every $k \in \{0,\dots,2^n\}$ and each $n \geq 1$, which yields the thesis by a classical density argument.
\end{proof}


\section{Application to the existence of exponentially stable trajectories}
\label{section:Lyapunov}

In this last section, we provide an application of Theorem \ref{thm:Viability} to obtain the existence of trajectories for which a Lyapunov functional $\Wpazo : \Pcal_2(\R^d) \to \R_+ \cup \{+\infty\}$ with domain $\textnormal{dom}(\Wpazo) \subset \Pcal_2(\R^d)$ decays exponentially. This result will involve a suitable class of  \textit{directional lower-derivatives}, defined by
\begin{equation*}
\D_{\uparrow} \Wpazo(\mu)(\xi) := \liminf_{\substack{h \to 0^+ \hspace{-0.03cm}, \; \mu_h \in \textnormal{dom}(\Wpazo) \\ W_2(\mu_h,(\Id + h \xi)_{\sharp} \mu) = o(h)}} \hspace{-0.2cm} \frac{\Wpazo(\mu_h) - \Wpazo(\mu)}{h}
\end{equation*}
for any $\mu  $ with $ \Wpazo(\mu) < \infty$ and each $\xi \in L^2(\R^d,\R^d;\mu)$.

\begin{Def}[Strict Lyapunov functions]
A map $\Wpazo : \Pcal_2(\R^d) \to \R_+ \cup \{+\infty\}$ is a \textit{strict Lyapunov function} for $V : \R_+ \times \Pcal_2(\R^d) \rightrightarrows C^0(\R^d,\R^d)$ if the following holds.
\begin{enumerate}
\item[$(i)$] $\Wpazo(\cdot)$ has compact sublevels in $\Pcal_2(\R^d)$.
\item[$(ii)$] For $\Lcal^1$-almost every $t \geq 0$ and all $\mu \in \textnormal{dom}(\Wpazo)$, there exists a $v \in V(t,\mu)$ for which
\begin{equation*}
\D_{\uparrow} \Wpazo(\mu)(v) \leq -\rho \Wpazo(\mu),
\end{equation*}
where $\rho > 0$ is a fixed constant.
\end{enumerate}
\end{Def}

\begin{thm}[Exponentially stable trajectories]
\label{thm:Lyapunov}
Let $V : \R_+ \times \Pcal_2(\R^d) \rightrightarrows C^0(\R^d,\R^d)$ be a set-valued map with convex images satisfying hypotheses \ref{hyp:CI} wherein $[0,T]$ is replaced by $[0,+\infty)$, and $\Wpazo : \Pcal_2(\R^d) \to \R_+ \cup \{+\infty\}$ be a strict Lyapunov function for $V(\cdot,\cdot)$. Then for each $\mu^0 \in \Pcal_c(\R^d)$, there exists a curve $\mu(\cdot) \in \Spazo_{[0,+\infty)}(\mu^0)$ such that
\begin{equation*}
\Wpazo(\mu(t)) \leq \Wpazo(\mu^0) e^{- \rho t},
\end{equation*}
for all times $t \geq 0$.
\end{thm}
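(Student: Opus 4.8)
The plan is to deduce the statement from the viability Theorem \ref{thm:Viability} after lifting the dynamics to the augmented space $\R^{d+1}$, where an extra scalar coordinate will record the target envelope $\Wpazo(\mu^0)e^{-\rho t}$. If $\Wpazo(\mu^0) = +\infty$ the bound is vacuous and any solution furnished by Theorem \ref{thm:Compactness} works, so I assume $w_0 := \Wpazo(\mu^0) < +\infty$. Writing points of $\R^{d+1}$ as $(x,s)$ with $x \in \R^d$, $s \in \R$, I introduce the set-valued map
\begin{equation*}
\tilde V(t,\tilde\mu) := \Big\{ (x,s) \mapsto (v(x),-\rho s) ~\text{s.t.}~ v \in V(t,\pi^1_\sharp \tilde\mu) \Big\},
\end{equation*}
where $\pi^1 : \R^{d+1} \to \R^d$ projects onto the first $d$ coordinates. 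Since the appended component $(x,s)\mapsto -\rho s$ is linear, $\tilde V$ has convex images and satisfies hypotheses \ref{hyp:CI} on $[0,+\infty)$, with constants $\tilde m = \max\{m,\rho\}$ (up to a dimensional factor), $\tilde l = l + \rho$ and $\tilde L = L$; here one uses that $W_2$ on $\R^{d+1}$ dominates the $W_2$-distance between first marginals and that $\Mpazo_2(\pi^1_\sharp\tilde\mu) \le \Mpazo_2(\tilde\mu)$.

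Next I encode the exponential estimate as a time-independent constraint. Let
\begin{equation*}
\Qpazo := \Big\{ \tilde\mu \in \Pcal_2(\R^{d+1}) ~\text{s.t.}~ \pi^2_\sharp\tilde\mu = \delta_s ~\text{for some}~ s \ge 0, ~\text{and}~ \Wpazo(\pi^1_\sharp\tilde\mu) \le s \Big\},
\end{equation*}
with $\pi^2$ the projection onto the last coordinate, so that every element of $\Qpazo$ is the lift $(x \mapsto (x,s))_\sharp\nu$ of its first marginal $\nu$ by a constant second coordinate $s$. Using that $\Wpazo$ has compact (hence closed) sublevels, so that $\Wpazo$ is lower-semicontinuous, together with the $1$-Lipschitz continuity of $\tilde\mu \mapsto \pi^2_\sharp\tilde\mu$, one checks that $\Qpazo$ is closed; moreover within any ball $\B_{\Pcal_2}(\tilde\mu,r)$ the coordinate $s$ and the first marginal $\nu$ range respectively in a bounded interval and in a fixed compact sublevel of $\Wpazo$, and since $\tilde\mu$ is recovered from $(\nu,s)$ through the continuous lift, $\Qpazo$ is proper. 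Finally set $\tilde\mu^0 := (x \mapsto (x,w_0))_\sharp\mu^0 \in \Qpazo \cap \Pcal_c(\R^{d+1})$.

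The heart of the argument is the tangency condition $\tilde V(t,\tilde\nu) \cap \co T_{\Qpazo}(\tilde\nu) \ne \emptyset$. Fix $t$ in the full-measure set where the strict Lyapunov property holds and $\tilde\nu \in \Qpazo$ with $\pi^2_\sharp\tilde\nu = \delta_s$, $\nu := \pi^1_\sharp\tilde\nu$; then $\Wpazo(\nu) \le s < +\infty$, so there is $v \in V(t,\nu)$ with $\D_{\uparrow} \Wpazo(\nu)(v) \le -\rho\Wpazo(\nu)$, and I claim the lift $\tilde v := \big((x,s)\mapsto (v(x),-\rho s)\big) \in \tilde V(t,\tilde\nu)$ belongs to $T_{\Qpazo}(\tilde\nu)$. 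By definition of the lower-derivative there are $h_i \to 0^+$ and $\mu_{h_i} \in \textnormal{dom}(\Wpazo)$ with $W_2(\mu_{h_i},(\Id + h_i v)_\sharp\nu) = o(h_i)$ and $\Wpazo(\mu_{h_i}) \le \Wpazo(\nu)(1 - \rho h_i) + o(h_i) \le s(1 - \rho h_i) + o(h_i)$, the last step using $\Wpazo(\nu) \le s$ and $1 - \rho h_i \ge 0$. Choosing $s_i := \max\{ s(1-\rho h_i), \Wpazo(\mu_{h_i})\}$, so that $|s_i - s(1-\rho h_i)| = o(h_i)$, the lift $\tilde\mu_{h_i} := (x \mapsto (x,s_i))_\sharp\mu_{h_i}$ lies in $\Qpazo$, and gluing an optimal plan between $\mu_{h_i}$ and $(\Id + h_iv)_\sharp\nu$ with the deterministic coupling $s_i \mapsto s(1-\rho h_i)$ gives
\begin{equation*}
W_2^2\big( \tilde\mu_{h_i}, (\Id + h_i\tilde v)_\sharp\tilde\nu \big) \le W_2^2\big( \mu_{h_i},(\Id+h_iv)_\sharp\nu \big) + |s_i - s(1-\rho h_i)|^2 = o(h_i^2),
\end{equation*}
since $(\Id + h_i\tilde v)_\sharp\tilde\nu$ has first marginal $(\Id + h_iv)_\sharp\nu$ and second marginal $\delta_{s(1-\rho h_i)}$. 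Hence $\dist_{\Pcal_2}((\Id+h_i\tilde v)_\sharp\tilde\nu;\Qpazo) = o(h_i)$, proving $\tilde v \in T_{\Qpazo}(\tilde\nu) \subset \co T_{\Qpazo}(\tilde\nu)$, as required.

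With all hypotheses verified, Theorem \ref{thm:Viability} applied on each interval $[0,n]$, combined with a diagonal extraction based on the compactness of solution sets (Theorem \ref{thm:Compactness}) and the closedness of $\Qpazo$, produces a curve $\tilde\mu(\cdot) \in \Spazo_{[0,+\infty)}(\tilde\mu^0)$ with $\tilde\mu(t) \in \Qpazo$ for all $t \ge 0$. Because the appended velocity component is always $-\rho s$, the second marginal of any solution is forced to be $\delta_{w_0 e^{-\rho t}}$, so $\mu(\cdot) := \pi^1_\sharp\tilde\mu(\cdot)$ solves \eqref{eq:CI} under a selection $v(t) \in V(t,\mu(t))$, lies in $\Spazo_{[0,+\infty)}(\mu^0)$, and the membership $\tilde\mu(t) \in \Qpazo$ reads exactly $\Wpazo(\mu(t)) \le w_0 e^{-\rho t} = \Wpazo(\mu^0) e^{-\rho t}$. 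The hard part will be the tangency step: one must convert the infinitesimal Lyapunov decay $\D_{\uparrow} \Wpazo(\nu)(v) \le -\rho \Wpazo(\nu)$ into an $o(h)$ tangency to the genuinely nonsmooth constraint $\Qpazo$, which exploits both the $o(h)$ freedom in the definition of $\D_{\uparrow}$ and the slack $\Wpazo(\nu) \le s$ available at boundary points of $\Qpazo$.
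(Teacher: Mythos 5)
Your proof is correct and follows the same overall strategy as the paper: lift the dynamics to $\Pcal_2(\R^{d+1})$ with an appended scalar coordinate carrying the exponential envelope, encode the Lyapunov decay as a proper constraint set of product measures $\mu \times \delta_s$ with $s \geq \Wpazo(\mu)$, and invoke Theorem \ref{thm:Viability}. The differences are in implementation, and two are worth noting. First, where the paper disposes of the tangency condition by citing an equivalence from nonsmooth analysis (\cite[Proposition 2.17]{Badreddine2022}, stating that $\D_{\uparrow}\Wpazo(\mu)(v) \leq -\rho\Wpazo(\mu)$ iff $(v,-\rho y) \in T_{\Qpazo}(\mu \times \delta_y)$), you prove the inclusion you need directly: extracting sequences from the $\liminf$ defining $\D_{\uparrow}\Wpazo$, lifting the approximating measures $\mu_{h_i}$ with the corrected level $s_i = \max\{s(1-\rho h_i),\Wpazo(\mu_{h_i})\}$, and exploiting that couplings between measures with Dirac second marginals split coordinatewise. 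This makes the argument self-contained, at the cost of length; note only that turning ``$\liminf \leq -\rho\Wpazo(\nu)$'' into a single sequence achieving both the $o(h_i)$ tangency and the value bound requires a small diagonal extraction over the approximation parameter, which you use implicitly. Second, your globalization to $[0,+\infty)$ differs: you apply viability on each $[0,n]$ and pass to a locally uniform limit via compactness of the solution sets, whereas the paper concatenates, re-running the construction on $[nT,(n+1)T]$ with the level re-initialized at $\Wpazo(\mu(nT))$; the paper's iteration avoids the (routine but unstated) patching of measurable selections for the limit curve, while yours avoids re-initialization. A third, inessential, difference: your appended velocity component is the pointwise field $-\rho s$, while the paper uses the mean $-\rho\int y \, \textnormal{d}\Bmu(x,y)$; these coincide on the constraint set, where the second marginal is a Dirac, so both yield the forced decay $y(t) = \Wpazo(\mu^0)e^{-\rho t}$.
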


\begin{proof}
We consider the extended dynamical system
\begin{equation}
\label{eq:CIExt}
\left\{
\begin{aligned}
& \partial_t \Bmu(t) \in - \Div_{(x,y)} \Big( \Vcal(t,\Bmu(t)) \Bmu(t) \Big), \\
& \Bmu(0) = \mu^0 \times \delta_{\Wpazo(\mu^0)},
\end{aligned}
\right.
\end{equation}
whose right-hand side is defined by
\begin{equation*}
\Vcal(t,\Bmu) := \Big\{ \big(v, - \rho \mathsmaller{\INTDom{y \,}{\R^{d+1}}{\Bmu(x,y)}} \big) ~\, \text{s.t.}~ v \in V(t,\Bpi^d_{\sharp}\Bmu) \Big\},
\end{equation*}
with $\Bpi^d : (x,y) \in \R^d \times \R \mapsto x \in \R^d$. Fixing an arbitrary $T>0$, it can be checked that this velocity field satisfies hypotheses \ref{hyp:CI}. We consider the constraints defined by
\begin{equation*}
\Qpazo := \Big\{ \Bmu \in \Pcal_2(\R^{d+1}) ~\, \text{s.t.}~ \Bmu = \mu \times \delta_y ~\text{and}~ y \geq \Wpazo(\mu) \Big\},
\end{equation*}
which is proper in $\Pcal_2(\R^{d+1})$ under our assumptions. By adapting existing results of non-smooth analysis following e.g. \cite[Proposition 2.17]{Badreddine2022}, it can be verified that
\begin{equation*}
\begin{aligned}
\D_{\uparrow} \Wpazo(\mu)(v) \leq -\rho \Wpazo(\mu) ~\Longleftrightarrow~ ( v,-\rho y) \in T_{\Qpazo}(\mu \times \delta_y),
\end{aligned}
\end{equation*}
for all $\mu \in \Pcal_2(\R^d)$, any $v \in V(t,\mu)$ and all $y \geq \Wpazo(\mu)$, which equivalently means that
\begin{equation*}
\Vcal(t,\Bmu) \cap T_{\Qpazo}(\Bmu) \neq \emptyset
\end{equation*}
for $\Lcal^1$-almost every $t \in [0,T]$ and each $\Bmu \in \Qpazo$. Thus by Theorem \ref{thm:Viability}, there exists a solution $\Bmu(\cdot)$ of \eqref{eq:CIExt} such that $\Bmu(t) \in \Qpazo$ for all times $t \in [0,T]$. Moreover by \eqref{eq:Semigroup} of Theorem \ref{thm:Wellposedness}, the curve $\Bmu(\cdot)$ admits the decomposition
\begin{equation*}
\Bmu(t) = \mu(t) \times \delta_{y(t)}
\end{equation*}
where $\mu(\cdot) \in \Spazo_{[0,T]}(\mu^0)$ satisfies
\begin{equation*}
\Wpazo(\mu(t)) \leq y(t) = e^{-\rho t} \Wpazo(\mu^0),
\end{equation*}
for all times $t \in [0,T]$.

We can then extend $\mu(\cdot)$ to $\R_+$ by repeating this process on intervals of the form $[nT,(n+1)T]$ while replacing $\Wpazo(\mu^0)$ by $\Wpazo(\mu(nT))$ in \eqref{eq:CIExt} for $n \geq 1$, yielding
\begin{equation*}
\Wpazo(\mu(t)) \leq \Wpazo(\mu^0) e^{- \rho t}
\end{equation*}
for all times $t \geq 0$, and thus concluding our proof.
\end{proof}


{\footnotesize
\bibliographystyle{plain}
\bibliography{../ControlWassersteinBib} 

\begin{thebibliography}{10}

\bibitem{AlbiPZ2017}
G.~Albi, L.~Pareschi, G.~Toscani, and M.~Zanella.
\newblock {Recent Advances in Opinion Modeling: Control and Social Influence}.
\newblock {\em Active Particles, Volume 1}, pages 49--98, 2017.

\bibitem{AmbrosioFuscoPallara}
L.~Ambrosio, N.~Fusco, and D.~Pallara.
\newblock {\em {F}unctions of {B}ounded {V}ariations and {F}ree {D}iscontinuity
  {P}roblems}.
\newblock Oxford Mathematical Monographs, 2000.

\bibitem{AGS}
L.~Ambrosio, N.~Gigli, and G.~Savar{\'e}.
\newblock {\em {G}radient {F}lows in {M}etric {S}paces and in the {S}pace of
  {P}robability {M}easures}.
\newblock Lectures in Math. ETH Z\"urich. Birkh\"auser Verlag, 2008.

\bibitem{Aubin1990}
J.-P. Aubin and H.~Frankowska.
\newblock {\em Set-Valued Analysis}.
\newblock Birkh{\"a}user Basel, 1990.

\bibitem{Badreddine2022}
Z.~Badreddine and H.~Frankowska.
\newblock {Solutions to Hamilton-Jacobi Equation on a Wasserstein Space}.
\newblock {\em Calc. Var. and Part. Diff. Eq.}, 61(1):1--41, 2022.

\bibitem{ContInc}
B.~Bonnet and H.~Frankowska.
\newblock {Differential Inclusions in Wasserstein Spaces: The Cauchy-Lipschitz
  Framework}.
\newblock {\em Journal of Differential Equations}, 271:594--637, 2021.

\bibitem{SetValuedPMP}
B.~Bonnet and H.~Frankowska.
\newblock {Necessary Optimality Conditions for Optimal Control Problems in
  Wasserstein Spaces}.
\newblock {\em Applied Mathematics and Optimization}, 84:1281--1330, 2021.

\bibitem{SemiSensitivity}
B.~Bonnet and H.~Frankowska.
\newblock {Semiconcavity and Sensitivity Analysis in Mean-Field Optimal Control
  and Applications}.
\newblock {\em Journal de Math. Pures et App.}, 157:282--345, 2022.

\bibitem{PMPWass}
B.~Bonnet and F.~Rossi.
\newblock {The Pontryagin Maximum Principle in the Wasserstein Space}.
\newblock {\em Calc. Var. and Part. Diff. Eq.}, 58:11, 2019.

\bibitem{Burger2021}
M.~Burger, R.~Pinnau, O.~Totzeck, and O.~Tse.
\newblock {Mean-Field Optimal Control and Optimality Conditions in the Space of
  Probability Measures}.
\newblock {\em SIAM Journal on Control and Optimization}, 59(2):977--1006,
  2021.

\bibitem{CDLL}
P~Cardaliaguet, F.~Delarue, J-M. Lasry, and P.-L. Lions.
\newblock {\em {The Master Equation and the Convergence Problem in Mean Field
  Games}}.
\newblock Annals of Mathematics Studies. Princeton Univesrity Press, 2019.

\bibitem{Cavagnari2021}
G.~Cavagnari, S.~Lisini, C.~Orrieri, and G.~Savar\'e.
\newblock {Lagrangian, Eulerian and Kantorovich Formulations of Multi-Agent
  Optimal Control Problems: Equivalence and Gamma-Convergence}.
\newblock {\em Journ. of Diff. Eq.}, 322:268--364, 2022.

\bibitem{CPT}
E.~Cristiani, B.~Piccoli, and A.~Tosin.
\newblock {\em {Multiscale Modeling of Pedestrian Dynamics}}, volume~12.
\newblock Springer, 2014.

\bibitem{Fornasier2019}
M.~Fornasier, S.~Lisini, C.~Orrieri, and G.~Savar{\'e}.
\newblock {Mean-Field Optimal Control as Gamma-Limit of Finite Agent Controls}.
\newblock {\em Eur. Journ. of App. Math.}, 30(6):1153--1186, 2019.

\bibitem{FornasierPR2014}
M.~Fornasier, B.~Piccoli, and F.~Rossi.
\newblock {M}ean-{F}ield {S}parse {O}ptimal {C}ontrol.
\newblock {\em Philosophical Transactions of the Royal Society A.},
  372(20130400), 2014.

\bibitem{Frankowska1996}
H.~Frankowska and S.~Plaskacz.
\newblock {A Measurable Upper Semicontinuous Viability Theorem for Tubes}.
\newblock {\em Nonlinear Analysis. Theory, Meth. $\&$ App.}, 26(3):565--582,
  1996.

\bibitem{Frankowska1995}
H.~Frankowska, S.~Plaskacz, and T.~Rzezuchowski.
\newblock {Measurable Viability Theorems and the Hamilton-Jacobi-Bellman
  Equation}.
\newblock {\em Journal of Differential Equations}, 116(2):265--305, 1995.

\bibitem{Horvath2012}
John Horv{\'a}th.
\newblock {\em {Topological Vector Spaces and Distributions}}.
\newblock Courier Corporation, 2012.

\bibitem{Huang2006}
M.Y. Huang, R.~Malham\'e, and P.E. Caines.
\newblock {Large Population Stochastic Dynamic Games : Closed-Loop
  McKean-Vlasov Systems and the Nash Certainty Equivalence Principle}.
\newblock {\em Comm. in Info. and Systems}, 6(3):221--252, 2006.

\bibitem{Lasry2007}
J-M. Lasry and P.-L. Lions.
\newblock {Mean Field Games}.
\newblock {\em Japanese Journal of Mathematics}, 2(1):229--260, 2007.

\bibitem{Otto2001}
F.~Otto.
\newblock {The Geometry of Dissipative Equations : The Porous Medium Equation}.
\newblock {\em Communications in Partial Differential Equations}, 26:101--174,
  2001.

\bibitem{Pedestrian}
B.~Piccoli and F.~Rossi.
\newblock {Transport Equation with Nonlocal Velocity in Wasserstein Spaces :
  Convergence of Numerical Schemes}.
\newblock {\em Acta App. Math.}, 124(1):73--105, 2013.

\bibitem{ControlKCS}
B.~Piccoli, F.~Rossi, and E.~Tr{\'e}lat.
\newblock {Control to Flocking of the Kinetic Cucker-Smale model}.
\newblock {\em SIAM Journal on Mathematical Analysis}, 47(6):4685--4719, 2015.

\end{thebibliography}
}

\end{document}